\newtheorem{Theorem}{Theorem}[section]
\newtheorem{Proposition}[Theorem]{Proposition}
\theoremstyle{definition}
\newtheorem{Remark}[Theorem]{Remark}
\newtheorem{Example}[Theorem]{Example}
\newtheorem{Problem}[Theorem]{Problem}
\newtheorem{Conjecture}[Theorem]{Conjecture}
\newtheorem{Question}[Theorem]{Question}
\def\ZZ{{\mathbb Z}}
\def\RR{{\mathbb R}}
\begin{document}

\title{Lexicographic and reverse lexicographic quadratic Gr\"{o}bner bases of cut ideals}
\author{Ryuichi Sakamoto}
\date{}

\address{Ryuichi Sakamoto,
Department of Mathematical Sciences,
Graduate School of Science and Technology,
Kwansei Gakuin University,
Sanda, Hyogo 669-1337, Japan} 
\email{dpm86391@kwansei.ac.jp}

\subjclass[2010]{13P10}
\keywords{Gr\"{o}bner bases, cut ideals, finite graphs}

\begin{abstract}
Hibi conjectured that if a toric ideal has a quadratic Gr\"{o}bner basis,
 then the toric ideal has either a lexicographic or a reverse lexicographic
 quadratic Gr\"{o}bner basis.
 In this paper, we present a cut ideal of a graph that serves as a
 counterexample to this conjecture.
 We also discuss the existence of a quadratic Gr\"{o}bner basis of a cut
 ideal of a cycle.
 Nagel and Petrovi\'{c} claimed that a cut ideal of a cycle has
 a lexicographic quadratic Gr\"{o}bner basis using the results of
 Chifman and Petrovi\'{c}.
 However, we point out that the results of Chifman and Petrovi\'{c} used by Nagel and Petrovi\'{c} are incorrect for cycles of length greater than or equal to 6.
Hence the existence of a quadratic Gr\"{o}bner basis for the cut ideal of a cycle 
(a ring graph) is an open question.
 We also provide a lexicographic quadratic Gr\"{o}bner basis of a cut ideal
 of a cycle of length less than or equal to 7.
\end{abstract}

\maketitle

\section*{Introduction}

A $d \times n$ integer matrix $A=(\bm{a}_{1},\bm{a}_{2},\ldots ,\bm{a}_{n})$ is called a {\em configuration} if there exists a vector $\bm{c}\in \mathbb{R}^d$ such that for all $1\leq{i}\leq{n}$, the inner product $\bm{a}_{i} \cdot \bm{c}$ is equal to $1$.
Let $K$ be a field and let $K[\bm{x}]=K[x_{1},x_{2},\ldots ,x_{n}]$
be a polynomial ring in $n$ variables. For an integer vector $\bm{\alpha}=(\alpha_{1},\alpha_{2},\ldots ,\alpha_{d})\in \mathbb{Z}^d$,
we define the Laurent monomial $\bm{t^{\alpha}}=t_{1}^{\alpha_{1}}t_{2}^{\alpha_{2}}\ldots t_{d}^{\alpha_{d}} \in K[t_1^{\pm 1},t_2^{\pm 1},\ldots,t_d^{\pm 1}]$ and 
$K[A]=K[\bm{t}^{\bm{a}_{1}},\bm{t}^{\bm{a}_{2}},\ldots ,\bm{t}^{\bm{a}_{n}}]$. Let $\pi$ be a homomorphism 
$\pi:K[\bm{x}]\rightarrow K[A]$, where $\pi (x_{i})=\bm{t}^{\bm{a}_{i}}$. The kernel of $\pi$ is called 
the {\em toric ideal} of $A$ and is denoted by $I_{A}$. 
It is known \cite{HOH, Sturm2} that $I_A$ is generated by homogeneous binomials associated to
the kernel of $A$. 
For a configuration $A$, let ${\rm Ker}_{\mathbb{Z}}A=\{\bm{b} \in \mathbb{Z}^n \ | \ A\bm{b}=\bm{0} \}$. 
For each $\bm{b}=(b_1,\ldots,b_n) \in {\rm Ker}_{\mathbb{Z}}A$, 
we define 
$$f_{\bm{b}}=\prod_{b_i>0} {x_i}^{b_i} - \prod_{b_j<0} x_j^{-b_j}
\in K[\bm{x}].$$ 
Then $I_A=\langle f_{\bm{b}} \ | \ \bm{b} \in {\rm Ker}_{\mathbb{Z}}A \rangle$. 
%
%
%
Commutative algebraists are interested in the following properties:
\begin{enumerate}
\item The toric ideal $I_{A}$ is generated by quadratic binomials;
\item The toric ring $K[A]$ is Koszul;
\item There exists a monomial order satisfying that a Gr\"{o}bner basis of $I_{A}$ consists of quadratic binomials.
\end{enumerate}
The implication $(3)\Rightarrow(2)\Rightarrow(1)$ is true, but
both $(1)\Rightarrow(2)$ and  $(2)\Rightarrow(3)$ are false in general
(for example, see \cite{Hibi, OhsugiHibi1}).
Several classes of toric ideals with
lexicographic/reverse lexicographic quadratic Gr\"{o}bner bases are known (for example, see 
\cite{AHT,Dari, OhsugiHibi2, OhsugiHibi3, OhsugiHibi4, Shibata}).
In contrast, in \cite{AokiOhsugiHibiTakemura1, AokiOhsugiHibiTakemura2, OhsugiHibi5},
sorting monomial orders (which are not necessarily lexicographic or reverse lexicographic) are used to construct a quadratic Gr\"{o}bner basis.  
The monomial orders appearing in the theory of toric fiber products \cite{Sulli} constitute another example
that is not necessarily lexicographic or reverse lexicographic.
The following conjecture was presented by Hibi.

\begin{Conjecture}
\label{conj}
Suppose that the toric ideal $I_A$ has a quadratic Gr\"{o}bner {basis}.
Then $I_A$ has either a lexicographic or reverse lexicographic 
quadratic Gr\"{o}bner basis.
\end{Conjecture}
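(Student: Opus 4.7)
The plan is to disprove Conjecture \ref{conj} by constructing an explicit counterexample, and in light of the abstract the natural place to search is among cut ideals of small graphs. The overall strategy has three components: identify a graph $G$ whose cut ideal $I_G$ admits \emph{some} quadratic Gr\"obner basis, then show that no lexicographic order on the variables of $I_G$ produces a quadratic Gr\"obner basis, and finally show the same for every reverse lexicographic order.

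For the first component, I would exploit the machinery already cited in the introduction: cut ideals behave well under gluing (toric fiber products \cite{Sulli}) and there are sorting-type monomial orders \cite{OhsugiHibi5} that frequently yield quadratic Gr\"obner bases for graph-theoretic toric ideals. Concretely, I would start from small graphs built by gluing cycles and complete graphs along edges, since these can be handled by toric fiber product theorems that guarantee a quadratic Gr\"obner basis with respect to a carefully chosen (non-lex, non-revlex) block order. A candidate graph must be complex enough that this sorting-type order is not refinable to a lex or revlex order while preserving quadraticity.

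The main obstacle is the second and third components, because there are $n!$ lexicographic and $n!$ reverse lexicographic orders to consider, where $n$ is the number of variables, and $n$ grows quickly (the number of cut-ideal variables on a graph with $v$ vertices is $2^{v-1}$). The plan is to keep the graph small enough (say, $v \leq 6$, so $n \leq 32$) that each orbit of variable orderings under the graph's automorphism group can be enumerated, and then for each representative order compute the reduced Gr\"obner basis by computer algebra and check whether it contains a generator of degree $\geq 3$. Before resorting to brute force I would look for structural obstructions: for a lex order with largest variable $x$, the initial ideal must contain the leading term of every binomial in $I_G$ that is divisible by $x$, so if $I_G$ has a binomial of the form $x \cdot m_1 - m_2$ with $\deg m_2 \geq 2$ and $x \nmid m_2$, then the reduced lex basis cannot be quadratic; dualizing gives an obstruction for revlex. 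Such monomial obstructions, applied to every choice of top (resp. bottom) variable and reinforced by $S$-pair considerations, should drastically prune the search.

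Finally, to certify the counterexample cleanly I would write down the explicit graph $G$, exhibit one quadratic Gr\"obner basis explicitly (together with its monomial order) to verify property (3) of the introduction, and then present the obstruction table showing that every candidate lex/revlex order fails. The delicate step is the last one: it is easy to check a \emph{given} order by computation, but one must rule out \emph{all} orders simultaneously, and the paper by Nagel and Petrovi\'c reminds us that overly optimistic claims in this area can fail. I would therefore cross-check the negative result by at least two independent Gr\"obner basis implementations before declaring the conjecture false.
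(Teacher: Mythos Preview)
Your three-part outline matches the paper's strategy, but the execution for parts two and three does not close. With $v=6$ the cut ideal sits in $32$ variables, so there are $32!$ lex orders and $32!$ revlex orders; the automorphism group of the counterexample graph is tiny, so ``enumerate orbit representatives and compute each Gr\"obner basis'' is hopeless by many orders of magnitude. Your sketched lex obstruction (a binomial $x\cdot m_1 - m_2$ with $x$ the largest variable and $\deg m_2\ge 2$) is not actually an obstruction: the initial term $x\,m_1$ need only be divisible by \emph{some} quadratic initial monomial, which it may well be. So the plan as written cannot rule out all orders.

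The paper replaces the enumeration by two structural arguments that dispose of all orders at once. For revlex, the key input is the compressed-polytope criterion of Sturmfels--Sullivant (Proposition~\ref{compressed}): if $G$ contains an induced cycle of length $\ge 5$ then \emph{no} revlex initial ideal of $I_G$ is squarefree. Since $A_G$ is a $(0,1)$-matrix, any quadratic revlex initial ideal would automatically be squarefree, so the induced $5$-cycle in the chosen graph rules out every revlex order in one stroke (Proposition~\ref{inducedcycle}). For lex, the paper does not attack $G$ directly but passes to a combinatorial pure subring: contracting one edge of $G$ yields $K_{2,3}$, and lex quadratic Gr\"obner bases are inherited by combinatorial pure subrings (Proposition~\ref{cp}), so it suffices to show $I_{K_{2,3}}$ has no lex quadratic Gr\"obner basis. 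The symmetry group of $A_{K_{2,3}}$ acts transitively on its $16$ columns, collapsing the problem to a single case, where a short argument with eight explicit cubic binomials gives a contradiction (Proposition~\ref{k23}). These two ideas---the induced-cycle/squarefree obstruction for revlex and the combinatorial-pure-subring reduction to $K_{2,3}$ for lex---are precisely what your plan is missing; with them the negative half becomes a few lines rather than an intractable search.
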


In the present paper, we will present a cut ideal of a graph as a counterexample to this conjecture.

Now, we define the cut ideal of a graph.
Let $G$ be a finite connected simple graph with the vertex set $V(G)=\{1,2,\ldots ,m\}$ and the edge set $E(G)=\{e_{1},e_{2},\ldots ,e_{r}\}$.
Given a subset $C$ of $V(G)$,
we define a vector $\delta_{C}=(d_{1},d_{2},\ldots ,d_{r})\in \{0,1\}^r$ by
\begin{equation*}
d_{i}=\begin{cases}
								1 & \text{$|C\cap e_{i}|=1 \ {(e_i=\{j,k\})}$,}\\
								0 & \text{otherwise.}
							\end{cases}
\end{equation*}
We consider the configuration
\begin{equation*}
A_G=
{\displaystyle
\begin{pmatrix}
\delta_{C_{1}} && \delta_{C_{2}} && \cdots && \delta_{C_{N}}\\
\\
1 && 1 && \cdots && 1\\
\end{pmatrix},
}
\end{equation*}
where $\{\delta_{C} \ | \ C \subset V(G) \} = \{\delta_{C_{1}}, \delta_{C_{2}},\ldots,\delta_{C_{N}}\}$ and $N=2^{m-1}$.
The toric ideal of $A_G$ is called the {\em cut ideal} of $G$ and is denoted by $I_G$
(see \cite{Sturm} for details). 
This definition of the cut ideal is different from that in \cite{Sturm}. However, the two definitions are equivalent. 
In fact, in \cite{Sturm} they say that ``Indeed, the convex hull of the exponent vectors $\phi_G$ is affinely isomorphic 
to ${\rm Cut}^{\square}(G)$.'' Here ${\rm Cut}^{\square}(G)$ is the convex hull of \{$\delta_C \ | \ C\subset V(G)$\}. 
We illustrate this equivalence by an example. 
\begin{Example}
Let $G$ be a cycle of length $4$ with $V(G)=\{1,2,3,4\}, E(G)=\{e_1=\{1,2\}, e_2=\{2,3\}, e_3=\{3,4\}, e_4=\{1,4\}$\}. 
Then $A_G$ is
\begin{equation*}
A_G=
{\displaystyle
\begin{pmatrix}
0 && 1 && 1 && 1 && 0 && 0 && 0 && 1\\
0 && 1 && 0 && 0 && 1 && 1 && 0 && 1\\
0 && 0 && 1 && 0 && 1 && 0 && 1 && 1\\
0 && 0 && 0 && 1 && 0 && 1 && 1 && 1\\
\hline
1 && 1 && 1 && 1 && 1 && 1 && 1 && 1
\end{pmatrix}.
}
\end{equation*}
Here, the $i$-th row of $A_{G}$ is indexed by the edge $e_i$ and
 the $j$-th column of  $A_{G}$ is indexed by the subset $C_j
 \subset \{1,2,3,4\}$, where $C_1=\phi, C_2=\{2\}, C_3=\{2,3\}, C_4=\{2,3,4\}, C_5=\{3\}, C_6=\{3,4\}, C_7=\{4\}, C_8=\{2,4\}$. On the other hand, in \cite{Sturm}, the cut ideal of $G$ is defined as the kernel of homomorphism 
$\phi_G:K[q_{|1234}, q_{2|134},  q_{23|14},$\\ $q_{234|1}, q_{3|124}, q_{34|12}, q_{4|123}, q_{24|13}]\rightarrow K[s_{12}, s_{23}, s_{34}, s_{14}, t_{12}, t_{23}, t_{34}, t_{14}]$ with
$$q_{|1234}\mapsto t_{12}t_{23}t_{34}t_{14} \ \ \ \ \ \ \ q_{2|134}\mapsto s_{12}s_{23}t_{34}t_{14}$$
$$q_{23|14}\mapsto s_{12}t_{23}s_{34}t_{14} \ \ \ \ \ \ \ q_{234|1}\mapsto s_{12}t_{23}t_{34}s_{14}$$
$$q_{3|124}\mapsto t_{12}s_{23}s_{34}t_{14} \ \ \ \ \ \ \ q_{34|12}\mapsto t_{12}s_{23}t_{34}s_{14}$$
$$q_{4|123}\mapsto t_{12}t_{23}s_{34}s_{14} \ \ \ \ \ \ \ q_{24|13}\mapsto s_{12}s_{23}s_{34}s_{14}.$$
So, the cut ideal defined in \cite{Sturm} is the toric ideal of the following configuration $A_{G}'$:
\begin{equation*}
{\displaystyle
\begin{matrix}
s_{12} \\
s_{23}\\
s_{34}\\
s_{14}\\
t_{12}\\
t_{23}\\
t_{34}\\
t_{14}
\end{matrix}
\begin{pmatrix}
0 && 1 && 1 && 1 && 0 && 0 && 0 && 1\\
0 && 1 && 0 && 0 && 1 && 1 && 0 && 1\\
0 && 0 && 1 && 0 && 1 && 0 && 1 && 1\\
0 && 0 && 0 && 1 && 0 && 1 && 1 && 1\\
\hline
1 && 0 && 0 && 0 && 1 && 1 && 1 && 0\\
1 && 0 && 1 && 1 && 0 && 0 && 1 && 0\\
1 && 1 && 0 && 1 && 0 && 1 && 0 && 0\\
1 && 1 && 1 && 0 && 1 && 0 && 0 && 0
\end{pmatrix}
},
\end{equation*}
where $j$-th column is indexed by $j$-th element of $( q_{|1234}, q_{2|134}, q_{23|14}, q_{234|1}, q_{3|124},  q_{34|12},$\\$ q_{4|123}, q_{24|13} )$.
We obtain the following matrix by elementary row operations from $A_{G}'$:
\begin{equation*}
{\displaystyle
\begin{pmatrix}
A_G\\
O
\end{pmatrix}
},
\end{equation*}
where $O$ is a $3\times8$ zero-matrix.
Therefore, ${\rm Ker}_\ZZ A_G={\rm Ker}_\ZZ A_{G}'$.  
\qed
\end{Example}

We introduce important known results on the quadratic Gr\"{o}bner bases of 
cut ideals. 
An edge {\it contraction} for a graph $G$ is an operation that merges two vertices joined by the edge $e$ 
after removing $e$ from $G$.
A graph $H$ is called a {\it minor} of the graph $G$ if $H$ is obtained by deleting some edges and vertices and contracting some edges.  In this paper, $K_{n},K_{m,n}$, and $\mathcal{C}_{n}$ stand for the complete graph with $n$ vertices, the complete bipartite graph
on the vertex set $\{1, 2, \ldots, m\} \cup \{m+1,m+2,\ldots, m+n\}$ and the cycle of length $n$, respectively.

\begin{Proposition}[\cite{Engst}]
\label{quadgene}
Let $G$ be a graph.
Then $I_G$ is generated by quadratic binomials
if and only if $G$ is free of $K_4$ minors.
\end{Proposition}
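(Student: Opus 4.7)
The plan is to prove the two implications separately. For the ``only if'' direction, I would first establish that $I_{K_4}$ itself is not generated by quadratic binomials, and then propagate this obstruction under graph minors. For the ``if'' direction, I would exploit the classical structural characterization of $K_4$-minor-free graphs (equivalently, series-parallel graphs, or graphs of treewidth at most two) as those connected graphs that can be built from single edges by repeated $1$-sums (vertex identifications) and $2$-sums (edge identifications). The technology of toric fiber products \cite{Sulli} supplies the language to follow cut ideals through these gluing operations.

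For the base case in the ``only if'' direction, I would enumerate the eight distinct cuts of $K_4$ and set up $I_{K_4}$ in a polynomial ring in eight variables. A direct computation or a clever pairing argument exhibits a minimal binomial generator of degree three in $I_{K_4}$. To transfer this obstruction to any graph $G$ containing a $K_4$ minor, I would analyze the two minor operations at the level of cut ideals. Edge deletion corresponds to dropping a coordinate from every cut vector, which identifies certain variables in the polynomial ring and induces a surjection from a quotient of $K[A_G]$ onto $K[A_{G \setminus e}]$. Edge contraction corresponds to restricting attention to cuts that do not separate the endpoints of $e$, so that $I_{G/e}$ can be realized as an elimination ideal of $I_G$. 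In either case, I would show that a minimal generator of degree at least three in the cut ideal of the minor lifts to a minimal generator of degree at least three in $I_G$, by a careful lift-and-reduce argument against the putative quadratic relations.

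For the ``if'' direction, I would induct on the number of gluing steps needed to build $G$ from edges. The base case $G = K_2$ is trivial since $I_{K_2} = (0)$. The inductive step is to show that a $1$-sum $G_1 \cup_v G_2$ or a $2$-sum $G_1 \cup_e G_2$ of two graphs with quadratically generated cut ideals again has a quadratically generated cut ideal. For the $1$-sum, the cuts of $G$ correspond bijectively to compatible pairs of cuts of $G_1$ and $G_2$ agreeing on the shared vertex (up to swapping sides of the bipartition), and this compatibility is captured by a toric fiber product with a trivial base. For the $2$-sum, the cuts of $G$ are pairs of cuts of $G_1$ and $G_2$ that induce the same cut on the shared edge, yielding a more substantial toric fiber product. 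In both cases, Sullivant's criterion yields quadratic generators for the fiber product provided the input ideals have quadratic generators and the glued substructure is small enough.

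The main technical obstacle is the $2$-sum step. Here one must verify that the toric fiber product hypotheses of \cite{Sulli} genuinely apply to cut ideals along an edge, that the new relations arising from matching cuts on the shared edge can be expressed as quadratic binomials, and that no degree-three or higher syzygies escape the inductive description. This delicate compatibility between the combinatorics of cuts and the algebraic structure of fiber products is the heart of the proof, and it is also the place where restricting to $K_4$-minor-free graphs (so that only clique sums of size at most two are ever needed) becomes essential.
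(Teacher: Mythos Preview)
The paper does not contain a proof of this proposition. It is stated with attribution to Engstr\"{o}m \cite{Engst} and used as a black box; no argument is given in the present paper. Consequently there is nothing here against which to compare your proposal.

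That said, your outline is broadly the strategy actually used in the literature. The ``only if'' direction goes back to Sturmfels--Sullivant \cite{Sturm}: one checks directly that $I_{K_4}$ has an indispensable cubic binomial, and then uses that passing to a minor gives a combinatorial pure subring (cf.\ the use of \cite[Lemma~3.2(2)]{Sturm} later in this paper), so a cubic minimal generator for the minor forces one for $G$. Your description of edge deletion as ``identifying variables'' and contraction as ``elimination'' is a little loose; the clean statement is the combinatorial-pure-subring one, which immediately transfers minimal generation degrees downward. For the ``if'' direction, your plan (series-parallel decomposition into $1$-sums and $2$-sums along edges, handled by toric fiber products in the sense of \cite{Sulli}) is exactly Engstr\"{o}m's line of attack. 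The genuine work, as you correctly flag, is the $2$-sum case: one must check that the codimension-one toric fiber product along the shared edge satisfies the linearity hypothesis needed for Sullivant's theorem to propagate quadratic generation, and this is where Engstr\"{o}m's paper does the real computation.

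So your plan is sound and matches the published proof, but since the present paper simply cites the result, there is no in-paper proof to compare details against.
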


\begin{Proposition}[\cite{Shibata}]
Let $G$ be a graph.
Then $K[A_G]$ is strongly Koszul if and only if $G$
is free of $(K_4, \mathcal{C}_5)$ minors.
In addition, if $K[A_G]$ is strongly Koszul, then $I_G$ has a 
quadratic Gr\"{o}bner basis.
\end{Proposition}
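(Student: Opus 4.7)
The plan is to split the equivalence into its two directions and to treat the final sentence (strongly Koszul implies a quadratic Gr\"obner basis) as a separate step. For the ``only if'' direction, the $K_4$-minor case is immediate: by Proposition~\ref{quadgene} a $K_4$ minor already obstructs quadratic generation, so $K[A_G]$ cannot be Koszul, still less strongly Koszul. For the $C_5$-minor case I would first prove a \emph{minor-monotonicity} lemma: deleting or contracting an edge of $G$ realises $K[A_H]$ as an algebra retract of $K[A_G]$ (via the natural restriction of cut vectors on the smaller edge set), and the strongly Koszul property descends under such retracts, because any witnessing colon ideal in $K[A_H]$ can be lifted through the retraction. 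Given this, it suffices to check directly that $K[A_{C_5}]$ is not strongly Koszul, which I would do by choosing two cut generators $u_C, u_{C'}$ corresponding to carefully chosen subsets of $V(C_5)$ and exhibiting an element of $(u_C):(u_{C'})$ that is not expressible as a $K[A_{C_5}]$-combination of degree-$1$ generators.

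For the ``if'' direction, the strategy is structural induction. A $K_4$-minor-free graph is series--parallel, and forbidding $C_5$ as a minor restricts the family further: I would aim to show that every such $G$ is built from single edges, triangles, and $C_4$ by $0$-sums and $1$-sums (disjoint unions and identifications at a single vertex). Granting this recursive description, I induct on $|V(G)|+|E(G)|$. The base cases --- trees (where $I_G=0$), $C_3$, and $C_4$ --- can be checked by hand. For the inductive step, one uses that cut ideals behave predictably under $0$- and $1$-sums (a toric fiber product in the sense of \cite{Sulli}), and that the strongly Koszul condition is preserved under this combination because the defining colon ideals can be verified factor-by-factor.

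Finally, for the assertion that strong Koszulness implies a quadratic Gr\"obner basis, I would order the variables $x_C$ compatibly with the filtration realising the strongly Koszul property; the degree-$1$ generation of each colon $(u_{i_1},\ldots,u_{i_r}):u_j$ then yields quadratic binomials whose leading monomials generate the relevant initial ideal. The main obstacle, by some margin, is the ``if'' direction: pinning down the structure of $(K_4,C_5)$-minor-free graphs and checking that the gluing operation realising their recursive description interacts cleanly with strong Koszulness of the cut ring; by comparison, the ``only if'' direction and the final implication are expected to be routine once the right lemmas are in hand.
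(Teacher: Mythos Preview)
The paper does not prove this proposition at all; it is quoted as a result of Shibata \cite{Shibata} and used as background. So there is no ``paper's own proof'' to compare your sketch against, and I will only comment on the viability of your plan.

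Your ``only if'' direction is reasonable in outline. The $K_4$ case is fine. For the $C_5$ case, the inheritance you need is not exactly a retract statement: under edge contraction $K[A_H]$ is a combinatorial pure subring of $K[A_G]$ (this is \cite[Lemma~3.2]{Sturm}), and combinatorial pure subrings do inherit strong Koszulness. Edge \emph{deletion}, however, does not in general produce a combinatorial pure subring, so the ``algebra retract'' step you describe needs a separate argument (or you must argue that any $C_5$ minor can be reached from some induced subgraph using only contractions).

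The serious gap is in the ``if'' direction. Your structural claim that every $(K_4,C_5)$-minor-free graph is built from edges, $C_3$, and $C_4$ by $0$-sums and $1$-sums is false. The complete bipartite graph $K_{2,3}$ is $2$-connected, hence admits no nontrivial $0$- or $1$-sum decomposition, yet it is both $K_4$-minor-free (it is series--parallel) and $C_5$-minor-free (all its cycles have length~$4$, and no contraction produces a $5$-cycle). Your induction therefore never reaches $K_{2,3}$, and more generally never reaches any $2$-connected member of the class other than $C_3$ and $C_4$. Shibata's argument uses a finer structural description (closer to iterated clique-sums along edges, together with an explicit analysis of the resulting cut polytopes), not merely vertex-sums.

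Finally, the implication ``strongly Koszul $\Rightarrow$ quadratic Gr\"obner basis'' is \emph{not} a general theorem about toric rings; Shibata proves it for this particular family by exhibiting an explicit monomial order once the structure of $G$ is known. Your sketch (``order the $x_C$ compatibly with the filtration and read off quadratic binomials from the degree-$1$ colon generators'') does not produce a monomial order with the required property in general, so this step also needs a concrete construction specific to the graphs at hand.
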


Nagel and Petrovi\'{c} \cite[Proposition 3.2]{Petro2} claimed that
if $G$ is a cycle, then  $I_{G}$ has a (lexicographic) quadratic Gr\"{o}bner basis.
However,  \cite[Propositions 2 and 3]{Petro1}, which are used in the proof of
\cite[Proposition 3.2]{Petro2}, contain some errors.
We will explain this in Section 2.
In contrast, the following problem is open.

\begin{Problem}
Classify the graphs whose cut ideals have a 
quadratic Gr\"{o}bner basis.
\end{Problem}

This paper comprises Sections $1$ and $2$. 
In Section 1, we show some results concerning the existence of a
lexicographic/reverse lexicographic quadratic Gr\"{o}bner basis
of cut ideals. 
Then, we give a graph whose cut ideal is a counterexample to Conjecture~\ref{conj}.
In Section 2, we study the cut ideal of a cycle.
First, we point out an error in the lexicographic quadratic Gr\"{o}bner basis of cut ideals of cycles given in
\cite[Proposition 3]{Petro1} (and introduced in \cite{Petro2}).
Finally, we construct a lexicographic quadratic Gr\"{o}bner basis of the cut ideal of a cycle of length $\le 7$.

\section{Lexicographic and reverse lexicographic Gr\"{o}bner bases}

In this section, we present necessary conditions for cut ideals to have a lexicographic/reverse lexicographic quadratic Gr\"{o}bner basis.
Using these results, we present a graph whose cut ideal is a counterexample to Conjecture~\ref{conj}.

First, we study reverse lexicographic quadratic Gr\"{o}bner bases of cut ideals.
The following was proved in \cite[Theorem~1.3]{Sturm}.

\begin{Proposition}
\label{compressed}
Let $G$ be a graph.
Then 
{the graph $G$ is free of $K_5$ minors and has no induced cycles of length $\ge 5$ 
if and only if
there exists a reverse lexicographic order such that
the initial ideal of $I_G$ is squarefree.}
\end{Proposition}

Using that fact that $A_G$ is a $(0,1)$ matrix and Proposition~\ref{compressed}, we are able to prove the following.

\begin{Proposition}
\label{inducedcycle}
Suppose that a graph $G$ has an induced cycle of length $\ge 5$.
Then $I_G$ has no reverse lexicographic quadratic Gr\"{o}bner bases.
\end{Proposition}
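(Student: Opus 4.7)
The plan is to derive the statement as a contrapositive application of Proposition~\ref{compressed}, whose main content is already available. Assuming $G$ has an induced cycle of length $\ge 5$, Proposition~\ref{compressed} rules out the existence of any reverse lexicographic order for which $\mathrm{in}(I_G)$ is squarefree. So it suffices to show: if $I_G$ has a reverse lexicographic quadratic Gr\"obner basis $\mathcal{G}$, then the corresponding initial ideal $\mathrm{in}(I_G)$ is squarefree.

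To see why squarefreeness is automatic once the basis is quadratic, I would exploit the fact that the columns of $A_G$ are pairwise distinct $(0,1)$-vectors (extended by a final row of $1$'s). Each element of $\mathcal{G}$ is a binomial $u-v$ of degree $2$, whose leading monomial is $u$ or $v$. The main step is the observation that no nonzero binomial of the form $x_i^2 - x_j x_k$ or $x_i^2 - x_j^2$ lies in $I_G$: such a binomial would require $2\bm{a}_i = \bm{a}_j + \bm{a}_k$ (resp. $=2\bm{a}_j$) in $\mathbb{Z}^{r+1}$, but the entries of $2\bm{a}_i$ lie in $\{0,2\}$ while the entries of a sum of two $(0,1)$-vectors lie in $\{0,1,2\}$, so coordinatewise we are forced to have $\bm{a}_j = \bm{a}_k = \bm{a}_i$, which by distinctness of the columns makes the binomial zero.

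From this one-line coordinate check it follows that every element of $\mathcal{G}$ is a binomial $x_a x_b - x_c x_d$ with $a\ne b$ and $c\ne d$, so every leading monomial is squarefree, and therefore the minimal generating set of $\mathrm{in}(I_G)$ (contained in the set of leading monomials of $\mathcal{G}$) consists of squarefree quadratic monomials. Hence $\mathrm{in}(I_G)$ is squarefree, contradicting Proposition~\ref{compressed}.

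The only nontrivial point is the $(0,1)$-parity argument that rules out leading terms of the form $x_i^2$, and once that is in hand the result follows immediately by chaining the implications. I do not anticipate a real obstacle: the combinatorial input about induced cycles is entirely absorbed by Proposition~\ref{compressed}, and the only work left is recording that a quadratic Gr\"obner basis of a toric ideal attached to a $(0,1)$-configuration with distinct columns automatically has a squarefree initial ideal.
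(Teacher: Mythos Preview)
Your proposal is correct and follows essentially the same argument as the paper: assume a reverse lexicographic quadratic Gr\"obner basis exists, use the fact that $A_G$ is a $(0,1)$-configuration with distinct columns to rule out any nonzero binomial of the form $x_i^2 - x_j x_k$, conclude that the initial ideal is squarefree, and obtain a contradiction with Proposition~\ref{compressed}. The only difference is that you spell out the coordinatewise parity argument for why $2\bm{a}_i = \bm{a}_j + \bm{a}_k$ forces $\bm{a}_i = \bm{a}_j = \bm{a}_k$, whereas the paper simply asserts the nonexistence of such binomials as a consequence of $A_G$ being a $(0,1)$ matrix.
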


\begin{proof}
Suppose that $I_G$ has a reverse lexicographic quadratic
reduced Gr\"{o}bner basis ${\mathcal G}$.
Any toric ideal is prime in general, and hence ${\mathcal G}$ consists of irreducible binomials.
Since $A_G$ is a configuration, ${\mathcal G}$ consists of homogeneous binomials.
Moreover, since $A_G$ is a $(0,1)$ matrix, there exist no nonzero binomials
of the form $x_i^2 - x_j x_k$ in $I_G$.
In fact, if $x_{i}^{2}-x_jx_k\neq 0$ belongs to $I_G$, then 
$2\delta_{C_{i}}=\delta_{C_{j}}+\delta_{C_{k}}$. 
However, this is impossible since $\delta_{C_{i}}, \delta_{C_{j}}, \delta_{C_{k}}$ are ($0,1$)-vectors. 
It therefore follows that the initial ideal is generated
by squarefree monomials.
By proposition~\ref{compressed}, $G$ has no induced cycle of length $\ge 5$.
\end{proof}

Second, we study the lexicographic quadratic Gr\"{o}bner bases of cut ideals.
Let $G$ be a complete bipartite graph $K_{2,3}$, as shown in Fig.~\ref{K23}.
\begin{figure}[h]
\begin{center}
\includegraphics[width=35mm, pagebox=cropbox, clip]{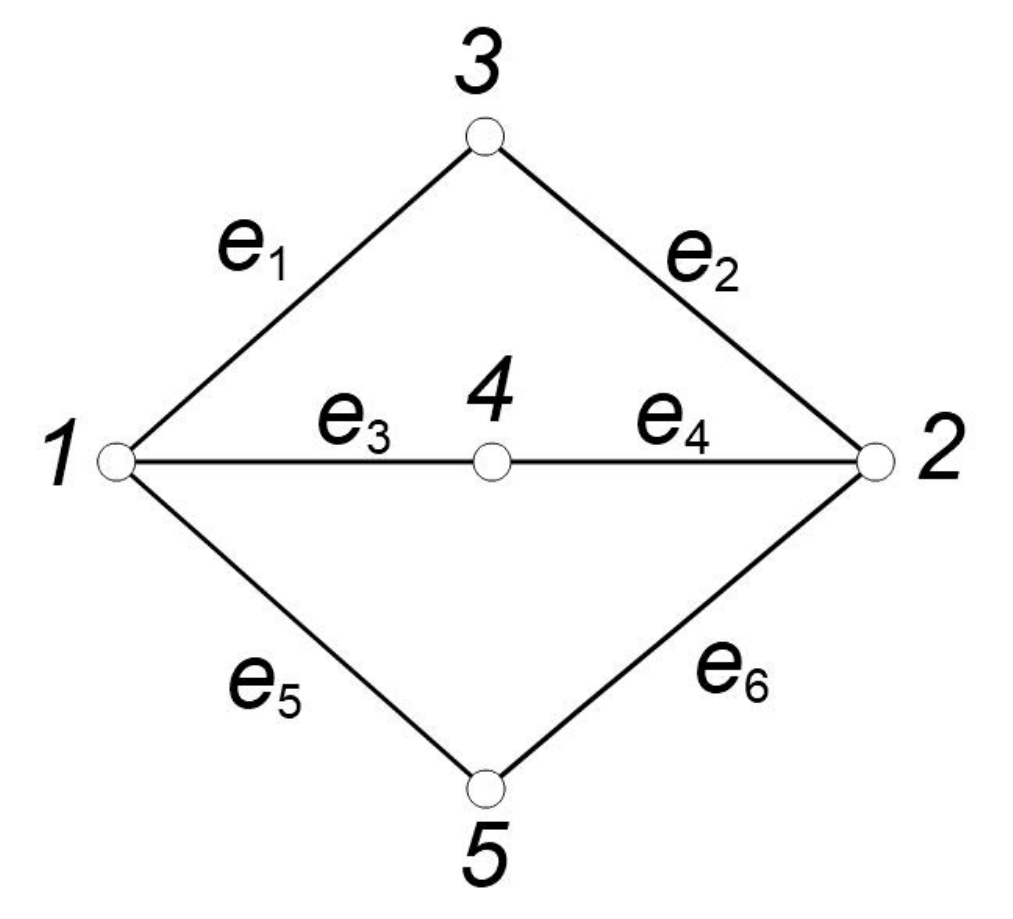}
\caption{ Complete bipartite graph $K_{2,3}$.}
\label{K23}
\end{center}
\end{figure}
The configuration $A_G$ is 
$$A_{G}=
\left(
\begin{array}{cccccccccccccccc}
0 & 0 & 0 & 0 & 1 & 1 & 1 & 1 & 0 & 0 & 0 & 0 & 1 & 1 & 1 & 1\\
0 & 0 & 0 & 0 & 0 & 0 & 0 & 0 & 1 & 1 & 1 & 1 & 1 & 1 & 1 & 1\\
\hline
0 & 0 & 1 & 1 & 1 & 1 & 0 & 0 & 1 & 1 & 0 & 0 & 0 & 0 & 1 & 1\\
0 & 0 & 1 & 1 & 0 & 0 & 1 & 1 & 0 & 0 & 1 & 1 & 0 & 0 & 1 & 1\\
\hline
0 & 1 & 0 & 1 & 1 & 0 & 1 & 0 & 1 & 0 & 1 & 0 & 0 & 1 & 0 & 1\\
0 & 1 & 0 & 1 & 0 & 1 & 0 & 1 & 0 & 1 & 0 & 1 & 0 & 1 & 0 & 1\\
\hline
1 & 1 & 1 & 1 & 1 & 1 & 1 & 1 & 1 & 1 & 1 & 1 & 1 & 1 & 1 & 1
\end{array}
\right).
$$
{Here, the $i$-th row of $A_{G}$ is indexed by the edge $e_i$ and
 the $j$-th column of  $A_{G}$ is indexed by the subset $C_j
 \subset \{1,2,3,4,5\}$,
 where $C_1 = \emptyset, C_2 = \{5\}, C_3= \{4\}, C_4=\{4,5\}, C_5=\{2,3,4,5\}, 
C_6=\{2,3,4\}, C_7=\{2,3,5\}, C_8=\{2,3\}, C_9=\{2,4,5\}, C_{10}=\{2,4\}, 
C_{11}=\{2,5\}, C_{12}=\{2\}, C_{13}=\{3\}, C_{14}=\{3,5\}, C_{15}=\{3,4\}, C_{16}=\{3,4,5\}$. 
The configuration $A_{G}$ has a symmetry group, called switching in \cite{Deza}, as follows.\\
Given subsets $A, B\subset \{1,2,3,4,5\}$, let $A\triangle B$ denote the symmetric difference $(A\cup B)\setminus(A\cap B)$ 
of them. From the general theory of cuts, for any $C, C'\subset \{1,2,3,4,5\}, \delta_C+\delta_{C'}=\delta_{C\triangle C'}$ 
in $\mathbb{F}_{2}^6$. Hence each $C\subset \{1,2,3,4,5\}$ gives a permutation $\psi_C$ on 
$(\delta_{C_1}, \cdots, \delta_{C_{16}})$ defined by 
$$\psi_C(\delta_{C_1}, \cdots, \delta_{C_{16}})=(\delta_{C_{i_1}}, \cdots, \delta_{C_{i_{16}}}),$$
where $\delta_{C_k}+\delta_{C}=\delta_{C_{i_k}}$ in $\mathbb{F}_{2}^6$. The permutation $\psi_C$ naturally induces 
an action on $K[\bm{x}]$ by $\psi_C(x_k)=x_{i_k}$. Since
\begin{equation*}
{\displaystyle
\begin{pmatrix}
\delta_{C_{1}}+\delta_C && \cdots && \delta_{C_{16}}+\delta_C\\
\\
1 && \cdots && 1\\
\end{pmatrix}
}
\end{equation*}
is obtained by elementary row operations from
\begin{equation*}
{\displaystyle
\begin{pmatrix}
\delta_{C_{1}} && \cdots && \delta_{C_{16}}\\
\\
1 && \cdots && 1\\
\end{pmatrix},
}
\end{equation*}
their kernels are the same. Hence we have $\psi_C(I_G)=I_G$. 
We show that $I_{G}$ has no lexicographic quadratic Gr\"{o}bner bases by using these symmetries. 

\begin{Proposition}
\label{k23}
The cut ideal of  the complete bipartite graph  $K_{2,3}$ is generated by quadratic binomials and
 has no lexicographic quadratic Gr\"{o}bner bases.
\end{Proposition}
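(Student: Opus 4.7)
The first assertion is immediate from Proposition~\ref{quadgene}: since $K_{2,3}$ is bipartite on only five vertices, it contains no $K_4$ minor, and so $I_{K_{2,3}}$ is generated by quadratic binomials. The substance of the proposition lies in the non-existence of a lexicographic quadratic Gr\"obner basis.

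My plan is a case analysis over lexicographic orders on the sixteen cut variables, reduced by the symmetry of $K_{2,3}$. The automorphism group $\mathrm{Aut}(K_{2,3})\cong S_2\times S_3$ acts on $V(K_{2,3})=\{1,2\}\cup\{3,4,5\}$, and this together with the complementation involution $C\leftrightarrow V(G)\setminus C$ induces an action on the sixteen variables indexed by cuts. Under this action the set of lex orders breaks into a manageable number of orbits, so it suffices to check one representative from each orbit. I would first tabulate the variables by the two invariants $|C\cap\{1,2\}|$ (mod complementation) and $|C\cap\{3,4,5\}|$ (mod complementation), compute the orbits, and then enumerate orbit representatives of the initial segment of the lex order — this is what actually determines which factor of each quadratic binomial in $I_{K_{2,3}}$ is the leading term.

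For each representative lex order I would exhibit an explicit quadratic binomial in $I_{K_{2,3}}$, or an S-pair built from two quadratic generators, whose normal form modulo the quadratic part of $I_{K_{2,3}}$ is nonzero and of degree $\ge 3$; this forces the reduced Gr\"obner basis to contain a cubic (or higher) element, contradicting the hypothesis of a quadratic lex Gr\"obner basis. The candidate obstruction I expect comes from the threefold symmetry of $\{3,4,5\}$: any linear order on the cuts must choose one of these vertices to play a distinguished role, and the S-pair formed from two quadratic binomials differing by a transposition in $\{3,4,5\}$ is the natural source of a cubic remainder.

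The main obstacle will be the case analysis itself. Even after quotienting by $S_2\times S_3\times\mathbb{Z}/2$, several inequivalent lex orders remain, and for each I must verify not only the existence of a cubic binomial obtained from an S-pair but also that it is irreducible to zero under the given initial ideal. A secondary difficulty is to present the verification without appealing to an opaque computer algebra calculation; I would look for a uniform structural description of the bad S-pair — for instance, an S-pair $S(f,g)$ where $f$ and $g$ are related by an automorphism of $K_{2,3}$ whose fixed cuts are incomparable to both leading terms under the chosen lex order — so that a single argument covers all representatives at once.
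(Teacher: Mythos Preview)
Your proposal is a plan rather than a proof, and it misses two structural facts that let the paper dispense with case analysis entirely.

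First, the symmetry group you invoke is too small. Beyond $\mathrm{Aut}(K_{2,3})\cong S_2\times S_3$, the cut configuration $A_{K_{2,3}}$ carries the \emph{switching} symmetries: for each subset $S\subset V(K_{2,3})$ the map $\delta_C\mapsto\delta_{C\triangle S}$ permutes the columns of $A_{K_{2,3}}$ and hence induces an automorphism of $I_{K_{2,3}}$. These act transitively on the sixteen variables. (Your extra $\mathbb{Z}/2$ from complementation is in fact trivial, since $\delta_C=\delta_{V\setminus C}$ already.) Transitivity collapses your case analysis to a single case: in the fibre $\{x_1x_{16},\,x_2x_{15},\ldots,\,x_8x_9\}$ of quadratic monomials lying over $t_1\cdots t_6 t_7^2$, one may assume that $x_1x_{16}$ is the smallest, whence $x_1x_{16}\notin\mathrm{in}_<(I_{K_{2,3}})$.

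Second, the obstruction is not an S-pair reduction but a direct contradiction with the defining property of a lexicographic order. The paper writes down eight cubic binomials $f_1,\ldots,f_8\in I_{K_{2,3}}$, each of the shape $m_i - x_1 x_i x_{16}$ for $i\in\{5,\ldots,12\}$, all supported on the subring $R=K[x_1,x_5,\ldots,x_{12},x_{16}]$. One checks that there is no nonzero quadratic binomial $x_1x_i-u$ or $x_ix_{16}-u$ in $I_{K_{2,3}}$ for these $i$; together with $x_1x_{16}\notin\mathrm{in}_<(I_{K_{2,3}})$ this forces $\mathrm{in}_<(f_i)=m_i$ for every $i$, assuming the initial ideal is quadratic. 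But in a lex order some variable of $R$ is largest, and every variable of $R$ occurs in the second monomial $x_1x_ix_{16}$ of some $f_i$ and not in the corresponding first monomial $m_i$; for that $f_i$ the initial term would then have to be the second monomial --- a contradiction. This is precisely the ``uniform structural description'' you were hoping to find; locating these eight cubics, and observing that their trailing terms exhaust the variables of $R$, is the missing idea in your outline.
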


\begin{proof}
Since $K_{2,3}$ is free of $K_4$ minors,
$I_{K_{2,3}}$ is generated by quadratic binomials according to Proposition~\ref{quadgene}.
Let $<$ be a lexicographic order on $K[{\bf x}]$.
Suppose that the initial ideal of $I_{K_{2,3}}$ with respect to $<$ is quadratic.
Let ${\mathcal M}$ be the set of all monomials in $K[{\bf x}]$
and let $$S =
\{u \in {\mathcal M} \ | \ \pi (u) = t_1 t_2 t_3 t_4 t_5 t_6 t_7^2 \}
.$$
Then we have
$$
S=
\{
x_{1} x_{16}, x_{2} x_{15},  x_{3} x_{14},  x_{4} x_{13}, 
 x_{5} x_{12},  x_{6} x_{11},  x_{7} x_{10},  x_{8} x_{9}
\}.
$$
For each element $x_ix_{17-i}\in S$, 
$\psi_{C_{i}}(x_ix_{17-i})=x_1x_{16}$ for $i=2,\ldots ,8$. 
(For example, 
$\psi_{C_{2}}(x_2x_{15})=x_1x_{16} \  {\rm for} \ C_2=\{5\}$ 
since $\delta_{C_2}+\delta_{C_2}=\delta_{C_1}$ and $\delta_{C_{15}}+\delta_{C_2}=\delta_{C_{16}}$ in $\mathbb{F}_{2}^6$.) 
Hence we may assume that $x_{1} x_{16}$ is the smallest monomial in $S$
with respect to $<$.
It then follows that $x_{1} x_{16} \notin {\rm in}_<(I_{K_{2,3}})$.
We now consider the following 8 cubic binomials of $I_{K_{2,3}}$:
$$
\begin{array}{ccccc}
f_1 &=& x_{6} x_{7} x_{9} &- & x_{1} x_{5} x_{16},\\
f_2 &=& x_{5} x_{8} x_{10} &- & x_{1} x_{6} x_{16},\\
f_3 &=& x_{5} x_{8} x_{11} &- & x_{1} x_{7} x_{16},\\
f_4 &=& x_{6} x_{7} x_{12} &- & x_{1} x_{8} x_{16},\\
f_5 &=& x_{5} x_{10} x_{11} &- & x_{1} x_{9} x_{16},\\
f_6 &=& x_{6} x_{9} x_{12} &- & x_{1} x_{10} x_{16},\\
f_7 &=& x_{7} x_{9} x_{12} &- & x_{1} x_{11} x_{16},\\
f_8 &=& x_{8} x_{10} x_{11} &- & x_{1} x_{12} x_{16}.
\end{array}
$$
Suppose that there exists a nonzero binomial $x_1 x_i - x_j x_k
\in I_{K_{2,3}}$ with  $i \in \{5,\ldots, 12\}$.
Then we have $\delta_{C_i}=\delta_{C_{j}}+\delta_{C_{k}}$.
Since $\delta_{C_i}$ contains exactly 3 ones, so does $\delta_{C_{j}}+\delta_{C_{k}}$.
It then follows that one of $C_j$ and $C_k$ is $C_1$ and hence $x_1x_i - x_j x_k=0$.
Similarly, suppose that there exists a nonzero binomial 
$x_i x_{16}- x_j x_k
\in I_{K_{2,3}}$ with  $i \in \{5,\ldots, 12\}$.
Then we have $\delta_{C_i} +\delta_{C_{16}} =\delta_{C_{j}}+\delta_{C_{k}}$.
Since the sum of the components of $\delta_{C_i} +\delta_{C_{16}}$ is $9$, it follows that one of $C_j$ and $C_k$ is $C_{16}$ and hence $x_i x_{16} - x_j x_k=0$.
%
Thus $x_{1} x_{16}, x_1 x_i , x_i x_{16} \notin  {\rm in}_<(I_{K_{2,3}})$
for each $i \in \{5,\ldots, 12\}$.
{If $x_1 x_i x_{16}$ belongs to ${\rm in}_< (I_{K_{2,3}})$
for some $i \in \{5, \dots, 12\}$, then the cubic monomial
$x_1 x_i x_{16}$ belongs to the minimal set of monomial generators
of ${\rm in}_< (I_{K_{2,3}})$.
This contradicts the hypothesis that ${\rm in}_< (I_{K_{2,3}})$ is
generated by quadratic monomials.
Hence each $x_1 x_i x_{16}$ does not belong to ${\rm in}_< (I_{K_{2,3}})$. }
Thus the initial monomial of each cubic binomial $f_i$ $(1 \le i \le 8)$ above
is the first monomial.
Let $R=K[x_1, x_5,x_6, \ldots, x_{12}, x_{16}]$. 
Note that each $f_i$ belongs to $R$. 
Let $x_k \ (k\in\{1,5,6,\ldots,12,16\})$ be the greatest variable in $R$ with respect to the lexicographic order. 
Then $x_k$ appears in the second monomial of $f_j$ for some $j$. 
Since $<$ is a lexicographic order, the initial monomial of $f_j$ is the second monomial, a contradiction. 
\end{proof}

\begin{Remark}
Shibata \cite{Shibata} showed that the cut ideal of  the complete bipartite graph $K_{2,m}$
has a quadratic Gr\"{o}bner basis with respect to a reverse lexicographic order.
\end{Remark}

Let $A=(\bm{a}_{1},\bm{a}_{2},\ldots ,\bm{a}_{n})$ be a $d \times n$ configuration and
let $B=(\bm{a}_{i_1},\bm{a}_{i_2},\ldots ,\bm{a}_{i_m})$ be a submatrix of $A$.
Then $K[B]$ is called a {\em combinatorial pure subring} of $K[A]$
if there exists a vector $\bm{c}\in \mathbb{R}^d$ such that
$$
\bm{a}_i \cdot \bm{c}
\left\{
\begin{array}{cc}
=1 & i \in \{i_1,i_2,\ldots, i_m\}, \\
\\
<1 & \mbox{otherwise.}
\end{array}
\right.
$$
That is, $K[B]$ is a combinatorial pure subring of $K[A]$
if and only if there exists a face $F$ of the convex hull of $A$ such that
$\{\bm{a}_{1},\bm{a}_{2},\ldots ,\bm{a}_{n}\} \cap F =
\{\bm{a}_{i_1},\bm{a}_{i_2},\ldots ,\bm{a}_{i_m}\}$.
It is known that a combinatorial pure subring $K[B]$ inherits numerous properties
of $K[A]$
(see \cite{HOH}).
In particular, we have the following:

\begin{Proposition}
\label{cp}
Suppose that $K[B]$ is a combinatorial pure subring of $K[A]$.
If $I_A$ has a lexicographic quadratic Gr\"{o}bner basis,
then so does $I_B$.
\end{Proposition}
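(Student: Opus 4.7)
The plan is to transfer a quadratic lexicographic Gr\"obner basis of $I_A$ directly to one of $I_B$ by intersecting with the subring. Fix a lexicographic order $<$ on $K[\bm{x}]$ with respect to which the reduced Gr\"obner basis $\mathcal{G}$ of $I_A$ consists of quadratic binomials, and let $<'$ be its restriction to $S := K[x_{i_1},\ldots,x_{i_m}]$, which is again a lexicographic order. I would prove that the subset
\[
\mathcal{G}' := \{g \in \mathcal{G} \mid g \in S\}
\]
is a quadratic Gr\"obner basis of $I_B$ with respect to $<'$.

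The main ingredient is the following closure property of the combinatorial pure subring: if $u - v \in I_A$ is a binomial with $u \in S$, then $v \in S$ as well. To establish it, let $\bm{c}\in \mathbb{R}^d$ be the vector furnished by the definition of combinatorial pure subring, and for a monomial $w = x_{j_1}\cdots x_{j_d}$ set $\lambda(w) := \sum_{k=1}^{d} \bm{a}_{j_k} \cdot \bm{c}$. Since $A$ is a configuration, $I_A$ is homogeneous, so $\deg u = \deg v =: d$. Because $u - v \in \ker \pi$, the multisets of columns indexed by $u$ and $v$ have the same sum, so $\lambda(u) = \lambda(v)$. The defining conditions on $\bm{c}$ yield $\lambda(u) = d$ (every variable of $u$ lies in $S$) and $\lambda(v) \leq d$, with equality in the second if and only if every variable of $v$ lies in $S$. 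Combined, these force $v \in S$.

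The rest is routine Gr\"obner basis bookkeeping. The inclusion $\mathcal{G}' \subset I_B$ is immediate from the closure property. Conversely, take any nonzero $f \in I_B \subset I_A$; then ${\rm in}_<(f) = {\rm in}_{<'}(f)$ lies in $S$ and is divisible by ${\rm in}_<(g)$ for some $g \in \mathcal{G}$. Since ${\rm in}_<(g) \in S$, applying the closure property to $g$ gives $g \in \mathcal{G}'$. Hence $\mathcal{G}'$ is a Gr\"obner basis of $I_B$ with respect to $<'$, and since every element of $\mathcal{G}$ is a quadratic binomial, so is every element of $\mathcal{G}'$. The only substantive step is the weight argument of the middle paragraph; everything else rests on the elementary observation that restricting a lexicographic order to a polynomial subring again yields a lexicographic order.
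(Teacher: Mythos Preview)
Your argument is correct. The paper itself does not supply a proof of this proposition; it merely records the statement and points to \cite{HOH} for the general fact that combinatorial pure subrings inherit such properties. What you have written is precisely the standard face/hyperplane argument underlying that reference: the linear functional $\bm{c}$ singles out the face supporting $B$, and the equality $\lambda(u)=\lambda(v)$ together with homogeneity forces binomials with one monomial in $S$ to lie entirely in $S$, so that $I_B = I_A \cap S$ and the elimination-type identity $\mathrm{in}_{<'}(I_B) = \mathrm{in}_<(I_A)\cap S$ follows. The observation that restricting a lexicographic order to a subset of variables is again lexicographic is exactly what makes the ``lexicographic'' part go through. There is nothing to add or correct.
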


Suppose that a graph $H$ is obtained by an edge contraction from a graph $G$;
then it is known from \cite[Lemma 3.2 (2)]{Sturm} that $K[A_H]$ is a combinatorial pure subring of $K[A_G]$.
Thus we have the following from Propositions~\ref{k23} and \ref{cp}.

\begin{Proposition}
\label{k23contraction}
Let $G$ be a graph.
Suppose that $K_{2,3}$ is obtained by a sequence of contractions from $G$.
Then $I_G$ has no lexicographic quadratic Gr\"{o}bner bases.
\end{Proposition}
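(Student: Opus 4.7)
The plan is to prove this as a two-step corollary of the tools assembled just above, using the contrapositive of Proposition~\ref{cp}. The statement rules out lexicographic quadratic Gr\"obner bases for every graph from which $K_{2,3}$ is reachable by contractions, and we already know (i) that $I_{K_{2,3}}$ admits no such basis by Proposition~\ref{k23}, and (ii) that a single edge contraction produces a combinatorial pure subring by \cite[Lemma 3.2 (2)]{Sturm}. So the only thing to organize is how to iterate (ii) along the contraction sequence.

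First I would verify that the ``combinatorial pure subring'' relation is transitive: if $K[C]$ is a combinatorial pure subring of $K[B]$ witnessed by a face $F'$ of $\operatorname{conv}(B)$, and $K[B]$ is a combinatorial pure subring of $K[A]$ witnessed by a face $F$ of $\operatorname{conv}(A)$, then $F' \subseteq F$ is itself a face of $\operatorname{conv}(A)$ (a face of a face is a face), and its intersection with $\{\bm{a}_1, \ldots, \bm{a}_n\}$ is exactly the column set of $C$. Thus the composition of combinatorial pure embeddings is combinatorial pure.

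Next, given a sequence of edge contractions $G = G_0 \to G_1 \to \cdots \to G_k = K_{2,3}$, applying \cite[Lemma 3.2 (2)]{Sturm} at each step yields that $K[A_{G_{i+1}}]$ is a combinatorial pure subring of $K[A_{G_i}]$ for every $i$. By the transitivity just observed, $K[A_{K_{2,3}}]$ is a combinatorial pure subring of $K[A_G]$. Now apply the contrapositive of Proposition~\ref{cp}: since $I_{K_{2,3}}$ has no lexicographic quadratic Gr\"obner basis by Proposition~\ref{k23}, neither does $I_G$.

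I do not expect any serious obstacle here; everything needed has already been cited or proved in the excerpt, and the only non-cosmetic point is to make the transitivity of combinatorial pure embeddings explicit so that a whole chain of contractions, rather than a single one, can be handled uniformly.
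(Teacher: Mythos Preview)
Your argument is correct and matches the paper's approach: the paper derives the proposition directly from Propositions~\ref{k23} and~\ref{cp} together with \cite[Lemma~3.2~(2)]{Sturm}, leaving the passage from a single contraction to a sequence of contractions implicit. Your explicit transitivity check for combinatorial pure subrings is a valid way to fill that in; an equally short alternative is simply to iterate Proposition~\ref{cp} along the chain $G=G_0\to\cdots\to G_k=K_{2,3}$ without ever invoking transitivity of the subring relation.
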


Let $G$ be a graph with $6$ vertices and $7$ edges, as shown in Fig.~\ref{6-1}.
\begin{figure}[h]
\begin{center}
\includegraphics[width=35mm, pagebox=cropbox, clip]{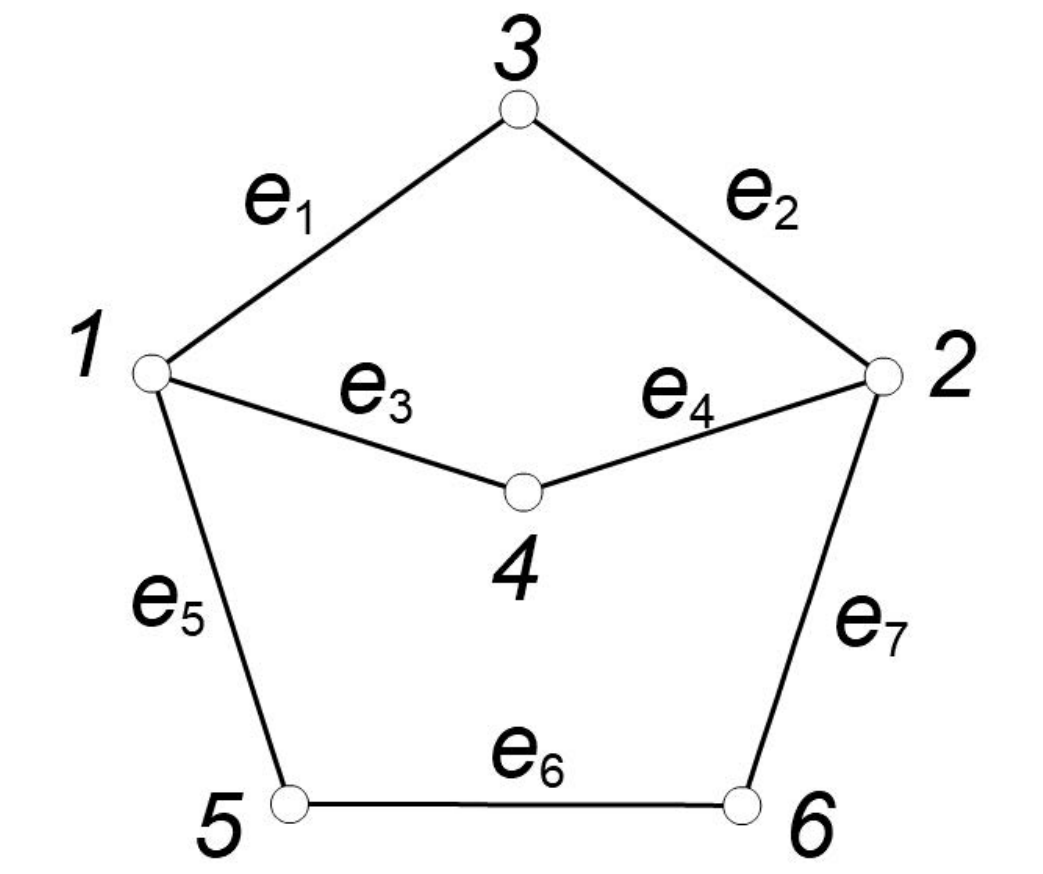}
\caption{ A counterexample to Conjecture ~\ref{conj}.}
\label{6-1}
\end{center}
\end{figure}
\noindent
Then the configuration $A_{G}$ is

\begin{center}
$\left( \begin{smallmatrix}
0 & 0 & 0 & 0 & 0 & 0 & 0 & 0 & 0 & 0 & 0 & 0 & 0 & 0 & 0 & 0 & 1 & 1 & 1 & 1 & 1 & 1 & 1 & 1 & 1 & 1 & 1 & 1 & 1 & 1 & 1 & 1\\
0 & 0 & 0 & 0 & 0 & 0 & 0 & 0 & 1 & 1 & 1 & 1 & 1 & 1 & 1 & 1 & 1 & 1 & 1 & 1 & 0 & 0 & 0 & 0 & 0 & 0 & 0 & 0 & 1 & 1 & 1 & 1\\
0 & 0 & 0 & 0 & 1 & 1 & 1 & 1 & 1 & 1 & 1 & 1 & 0 & 0 & 0 & 0 & 0 & 0 & 0 & 0 & 1 & 1 & 1 & 1 & 0 & 0 & 0 & 0 & 1 & 1 & 1 & 1\\
0 & 0 & 0 & 0 & 1 & 1 & 1 & 1 & 0 & 0 & 0 & 0 & 1 & 1 & 1 & 1 & 0 & 0 & 0 & 0 & 0 & 0 & 0 & 0 & 1 & 1 & 1 & 1 & 1 & 1 & 1 & 1\\
0 & 0 & 1 & 1 & 0 & 0 & 1 & 1 & 1 & 1 & 0 & 0 & 1 & 1 & 0 & 0 & 0 & 0 & 1 & 1 & 1 & 1 & 0 & 0 & 1 & 1 & 0 & 0 & 0 & 0 & 1 & 1\\
0 & 1 & 1 & 0 & 0 & 1 & 1 & 0 & 1 & 0 & 0 & 1 & 1 & 0 & 0 & 1 & 0 & 1 & 1 & 0 & 1 & 0 & 0 & 1 & 1 & 0 & 0 & 1 & 0 & 1 & 1 & 0\\
0 & 1 & 0 & 1 & 0 & 1 & 0 & 1 & 1 & 0 & 1 & 0 & 1 & 0 & 1 & 0 & 0 & 1 & 0 & 1 & 1 & 0 & 1 & 0 & 1 & 0 & 1 & 0 & 0 & 1 & 0 & 1\\
1 & 1 & 1 & 1 & 1 & 1 & 1 & 1 & 1 & 1 & 1 & 1 & 1 & 1 & 1 & 1 & 1 & 1 & 1 & 1 & 1 & 1 & 1 & 1 & 1 & 1 & 1 & 1 & 1 & 1 & 1 & 1
\end{smallmatrix} \right)$.
\end{center}
\vspace{5truept}
{Here, the $i$-th row of $A_{G}$ is indexed by the edge $e_i$ and
 the $j$-th column of  $A_{G}$ is indexed by the subset $C_j
 \subset \{1,2,3,4,5,6\}$,
where $C_1 = \emptyset, C_2 = \{6\}, C_3= \{5\}, C_4=\{5,6\}, C_5=\{4\}, 
C_6=\{4,6\}, C_7=\{4,5\}, C_8=\{4,5,6\}, C_9=\{2,4,5\}, C_{10}=\{2,4,5,6\}, 
C_{11}=\{2,4\}, C_{12}=\{2,4,6\}, C_{13}=\{2,5\}, C_{14}=\{2,5,6\}, C_{15}=\{2\}, 
C_{16}=\{2,6\}, C_{17}=\{3\}, C_{18}=\{3,6\}, C_{19}=\{3,5\}, C_{20}=\{3,5,6\}, 
C_{21}=\{2,3,4,5\}, C_{22}=\{2,3,4,5,6\}, C_{23}=\{2,3,4\}, C_{24}=\{2,3,4,6\}, C_{25}=\{2,3,5\}, 
C_{26}=\{2,3,5,6\}, C_{27}=\{2,3\}, C_{28}=\{2,3,6\}, C_{29}=\{3,4\}, C_{30}=\{3,4,6\}, C_{31}=\{3,4,5\}, C_{32}=\{3,4,5,6\}. $}
The configuration $A_{G}$ contains six combinatorial pure subrings which are isomorphic to $A_{K_{2,3}}$.
By considering weight vectors such that the reduced Gr\"{o}bner basis of $I_{K_{2,3}}$ 
is quadratic, we found a weight vector $\bm{w} \in \RR^{32}$ such that the reduced Gr\"{o}bner basis of $I_G$ 
is also quadratic.
Let $\bm{w}
=(25, 24, 24, 45, 46, 44, 37, 37, 47, 47, 63, 107, 47, 25, 24,$\\ $46, 36,
 33,20, 26, 102, 87, 80,103, 92, 35, 25, 26, 53, 37, 22, 27)$.
The following Gr\"{o}bner basis of $I_G$ with respect to $\bm{w}$ is quadratic:\\
$\{-x_{20}x_{31}+x_{19}x_{32},-x_{15}x_{3}+x_{14}x_{2},x_{28}x_{20}-x_{27}x_{19},-x_{27}x_{31}+x_{28}x_{32},x_{18}x_{31}-x_{30}x_{19},\\
x_{3}x_{32}-x_{8}x_{19},x_{3}x_{31}-x_{7}x_{19},x_{2}x_{19}-x_{18}x_{3},-x_{15}x_{19}+x_{18}x_{14},-x_{26}x_{15}+x_{27}x_{14},\\
x_{27}x_{3}-x_{26}x_{2},x_{1}x_{19}-x_{17}x_{3},-x_{17}x_{2}+x_{1}x_{18},x_{2}x_{31}-x_{30}x_{3},-x_{15}x_{31}+x_{30}x_{14},\\
-x_{30}x_{20}+x_{18}x_{32},x_{7}x_{27}-x_{28}x_{8},x_{7}x_{20}-x_{3}x_{32},-x_{8}x_{31}+x_{7}x_{32},x_{2}x_{31}-x_{6}x_{19},\\
x_{3}x_{20}-x_{4}x_{19},-x_{1}x_{31}+x_{5}x_{19},x_{4}x_{31}-x_{3}x_{32},x_{27}x_{19}-x_{18}x_{26},-x_{6}x_{3}+x_{7}x_{2},\\
-x_{7}x_{15}+x_{6}x_{14},-x_{6}x_{20}+x_{2}x_{32},x_{2}x_{32}-x_{8}x_{18},x_{2}x_{31}-x_{7}x_{18},-x_{5}x_{3}+x_{1}x_{7},\\
-x_{5}x_{2}+x_{1}x_{6},x_{27}x_{3}-x_{4}x_{28},x_{28}x_{15}-x_{16}x_{27},-x_{8}x_{20}+x_{4}x_{32},x_{27}x_{31}-x_{30}x_{26},\\
x_{1}x_{32}-x_{10}x_{27},x_{2}x_{32}-x_{9}x_{27},x_{16}x_{20}-x_{15}x_{19},x_{5}x_{20}-x_{1}x_{32},-x_{15}x_{3}+x_{13}x_{1},\\
-x_{10}x_{2}+x_{9}x_{1},-x_{15}x_{31}+x_{16}x_{32},x_{17}x_{31}-x_{29}x_{19},x_{1}x_{31}-x_{10}x_{28},x_{2}x_{31}-x_{9}x_{28},\\
-x_{1}x_{32}+x_{17}x_{8},x_{1}x_{31}-x_{17}x_{7},x_{6}x_{32}-x_{8}x_{30},x_{6}x_{31}-x_{7}x_{30},x_{1}x_{31}-x_{29}x_{3},\\
-x_{29}x_{2}+x_{1}x_{30},x_{30}x_{2}-x_{6}x_{18},x_{2}x_{20}-x_{4}x_{18},-x_{29}x_{20}+x_{17}x_{32},-x_{6}x_{26}+x_{7}x_{27},\\
x_{5}x_{18}-x_{1}x_{30},-x_{1}x_{30}+x_{17}x_{6},x_{28}x_{14}-x_{16}x_{26},x_{1}x_{20}-x_{17}x_{4},x_{2}x_{32}-x_{4}x_{30},\\
x_{3}x_{32}-x_{9}x_{26},x_{29}x_{1}-x_{17}x_{5},x_{8}x_{3}-x_{4}x_{7},-x_{11}x_{19}+x_{10}x_{18},x_{15}x_{19}-x_{13}x_{17},\\
x_{10}x_{18}-x_{9}x_{17},-x_{7}x_{15}+x_{16}x_{8},-x_{11}x_{31}+x_{10}x_{30},-x_{29}x_{18}+x_{17}x_{30},-x_{11}x_{3}+x_{10}x_{2},\\
-x_{10}x_{15}+x_{11}x_{14},-x_{1}x_{30}+x_{11}x_{28},x_{8}x_{2}-x_{4}x_{6},x_{5}x_{32}-x_{29}x_{8},x_{5}x_{31}-x_{29}x_{7},\\
x_{15}x_{3}-x_{16}x_{4},x_{1}x_{8}-x_{5}x_{4},x_{7}x_{15}-x_{13}x_{5},-x_{10}x_{6}+x_{9}x_{5},x_{9}x_{14}-x_{13}x_{10},\\
x_{5}x_{30}-x_{29}x_{6},-x_{1}x_{32}+x_{29}x_{4},-x_{1}x_{32}+x_{11}x_{26},x_{15}x_{31}-x_{13}x_{29},-x_{10}x_{30}+x_{9}x_{29},\\
-x_{11}x_{7}+x_{10}x_{6},-x_{23}x_{15}+x_{11}x_{27},-x_{1}x_{32}+x_{23}x_{14},x_{9}x_{15}-x_{11}x_{13},-x_{1}x_{32}+x_{22}x_{15},\\
x_{23}x_{3}-x_{22}x_{2},x_{22}x_{14}-x_{10}x_{26},-x_{23}x_{26}+x_{22}x_{27},-x_{25}x_{15}+x_{13}x_{27},-x_{25}x_{14}+x_{13}x_{26},\\
-x_{27}x_{3}+x_{25}x_{1},x_{23}x_{19}-x_{22}x_{18},-x_{23}x_{31}+x_{22}x_{30},-x_{1}x_{30}+x_{23}x_{16},x_{2}x_{32}-x_{21}x_{15},\\
x_{24}x_{15}-x_{1}x_{30},-x_{2}x_{32}+x_{23}x_{13},-x_{3}x_{32}+x_{21}x_{14},x_{23}x_{3}-x_{21}x_{1},-x_{24}x_{27}+x_{23}x_{28}\\
,x_{27}x_{19}-x_{25}x_{17},x_{1}x_{31}-x_{24}x_{14},x_{24}x_{20}-x_{23}x_{19},x_{23}x_{31}-x_{24}x_{32},-x_{23}x_{7}+x_{22}x_{6},\\
-x_{12}x_{15}+x_{11}x_{16},-x_{12}x_{27}+x_{1}x_{30},-x_{12}x_{14}+x_{10}x_{16},x_{1}x_{31}-x_{22}x_{16},x_{12}x_{20}-x_{10}x_{18},\\
-x_{12}x_{32}+x_{10}x_{30},-x_{3}x_{32}+x_{22}x_{13},-x_{24}x_{26}+x_{22}x_{28},x_{13}x_{28}-x_{25}x_{16},-x_{7}x_{27}+x_{25}x_{5},\\
x_{23}x_{19}-x_{21}x_{17},x_{3}x_{32}-x_{25}x_{10},-x_{24}x_{8}+x_{23}x_{7},x_{1}x_{31}-x_{12}x_{26},-x_{12}x_{8}+x_{10}x_{6},\\
x_{27}x_{31}-x_{25}x_{29},-x_{23}x_{3}+x_{24}x_{4},x_{2}x_{31}-x_{21}x_{16},-x_{23}x_{7}+x_{21}x_{5},-x_{12}x_{28}+x_{24}x_{16},\\
-x_{25}x_{9}+x_{21}x_{13},-x_{21}x_{10}+x_{22}x_{9},x_{2}x_{31}-x_{24}x_{13},-x_{23}x_{10}+x_{22}x_{11},x_{10}x_{2}-x_{12}x_{4},\\
x_{9}x_{16}-x_{12}x_{13},-x_{23}x_{31}+x_{21}x_{29},x_{2}x_{32}-x_{25}x_{11},x_{23}x_{9}-x_{21}x_{11},x_{21}x_{27}-x_{25}x_{23},\\
x_{21}x_{26}-x_{25}x_{22},x_{24}x_{11}-x_{12}x_{23},x_{24}x_{10}-x_{12}x_{22},x_{21}x_{28}-x_{24}x_{25},x_{2}x_{31}-x_{12}x_{25},\\
x_{24}x_{9}-x_{12}x_{21}\}$. \vspace{5truept} \\
For the sake of reliability, we computed this using several different
 software packages 
(CoCoA \cite{CoCoA}, Risa/Asir \cite{Asir}, and so on).
The code for the computation is available~in 
\begin{center}
{\tt https:\slash\slash sci-tech.ksc.kwansei.ac.jp/\textasciitilde hohsugi/R\_Sakamoto/code\_cutideal}
\end{center}
For example, if we input 
\begin{screen}
{\tt
M:=MakeTermOrd(mat([[25,24,24,45,46,44,37,37,47,47,63,107,47,25,24,\\
46,36,33,20,26,102,87,80,103,92,35,25,26,53,37,22,27]]));

R:= NewPolyRing(QQ, SymbolRange("x",1,32 ), M, 1); 

use R;

A:=mat([

[0,0,0,0,0,0,0,0,0,0,0,0,0,0,0,0,1,1,1,1,1,1,1,1,1,1,1,1,1,1,1,1],
[0,0,0,0,0,0,0,0,1,1,1,1,1,1,1,1,1,1,1,1,0,0,0,0,0,0,0,0,1,1,1,1],
[0,0,0,0,1,1,1,1,1,1,1,1,0,0,0,0,0,0,0,0,1,1,1,1,0,0,0,0,1,1,1,1],
[0,0,0,0,1,1,1,1,0,0,0,0,1,1,1,1,0,0,0,0,0,0,0,0,1,1,1,1,1,1,1,1],
[0,0,1,1,0,0,1,1,1,1,0,0,1,1,0,0,0,0,1,1,1,1,0,0,1,1,0,0,0,0,1,1],
[0,1,1,0,0,1,1,0,1,0,0,1,1,0,0,1,0,1,1,0,1,0,0,1,1,0,0,1,0,1,1,0],
[0,1,0,1,0,1,0,1,1,0,1,0,1,0,1,0,0,1,0,1,1,0,1,0,1,0,1,0,0,1,0,1],
[1,1,1,1,1,1,1,1,1,1,1,1,1,1,1,1,1,1,1,1,1,1,1,1,1,1,1,1,1,1,1,1]
]);

ReducedGBasis( toric(A) );
}
\end{screen}
to CoCoA, then we can obtain the reduced Gr\"{o}bner basis in several seconds. 
The monomial order $\bm{w}$ is neither lexicographic nor reverse lexicographic. 
In fact, 
all monomial orders for which the reduced Gr\"{o}bner bases of $I_{G}$ consist of quadratic binomials 
are neither lexicographic nor reverse lexicographic.

\begin{Theorem}
Let $G$ be the graph of Fig.~$\ref{6-1}$.
Then $I_G$ has quadratic Gr\"{o}bner bases,
none of which are either lexicographic or reverse lexicographic.
In particular, $I_G$ is a counterexample to Conjecture~$\ref{conj}$.
\end{Theorem}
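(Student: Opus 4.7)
My plan has three parts corresponding to the three claims packed into the theorem: (a) $I_G$ has a quadratic Gr\"obner basis; (b) no lexicographic order yields a quadratic initial ideal; (c) no reverse lexicographic order does either. For part (a), I would simply appeal to the explicit data displayed just before the theorem: given the weight vector $\bm w\in\RR^{32}$ and the listed set of quadratic binomials, it suffices to verify by computer algebra (for instance, by checking that every $S$-polynomial of pairs from the list reduces to zero modulo the list) that this set is in fact the reduced Gr\"obner basis of $I_G$ with respect to any term order refining $\bm w$. Every listed element is a quadratic binomial, so this establishes the existence of a quadratic Gr\"obner basis.

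For part (b), the strategy is to apply Proposition~\ref{k23contraction}. Reading off Figure~\ref{6-1}, the graph $G$ has $6$ vertices and $7$ edges and is obtained from $K_{2,3}$ by subdividing a single edge (inserting one degree-two vertex). Contracting either of the two edges incident to that new vertex produces $K_{2,3}$, so $K_{2,3}$ is obtained from $G$ by a sequence (here, of length one) of edge contractions. Proposition~\ref{k23contraction} then immediately yields that $I_G$ has no lexicographic quadratic Gr\"obner basis.

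For part (c), the strategy is to apply Proposition~\ref{inducedcycle}. Since $K_{2,3}$ contains a $4$-cycle through every one of its edges, subdividing one edge of $K_{2,3}$ lengthens each such $4$-cycle to a $5$-cycle in $G$. A subdivision adds no chords, so the resulting $5$-cycle is an induced cycle of length $\ge 5$ in $G$, and Proposition~\ref{inducedcycle} then gives that $I_G$ has no reverse lexicographic quadratic Gr\"obner basis. Combining (a), (b), (c) yields both statements in the theorem and in particular the counterexample to Conjecture~\ref{conj}.

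The main obstacle in this plan, and essentially the only step that is not a one-line invocation of an earlier proposition, is the verification in part (a): the displayed generating set contains well over one hundred binomials, and both finding the weight vector $\bm w$ (a search presumably guided by the six copies of $A_{K_{2,3}}$ realized as combinatorial pure subrings of $A_G$) and certifying that the resulting set is a Gr\"obner basis are finite but sizeable computations that in practice must be delegated to a computer algebra system. The structural arguments in parts (b) and (c) are then immediate from the two necessary conditions already established earlier in the section.
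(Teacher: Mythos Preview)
Your proposal is correct and follows essentially the same approach as the paper: part~(a) is exactly the computation displayed before the theorem, part~(c) invokes Proposition~\ref{inducedcycle} via the induced $5$-cycle, and part~(b) invokes Proposition~\ref{k23contraction} via the single edge contraction to $K_{2,3}$. The paper's proof is terser (it simply asserts the induced $5$-cycle and the contraction without your explicit subdivision description), but the logical structure is identical.
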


\begin{proof}
Since $G$ has an induced cycle of length 5,
$I_G$ has no reverse lexicographic quadratic Gr\"{o}bner bases by
Proposition~\ref{inducedcycle}.
Moreover, since $K_{2,3}$ is obtained by
contraction of an edge of $G$, 
$I_G$ has no lexicographic quadratic Gr\"{o}bner bases by
Proposition~\ref{k23contraction}.
\end{proof}

\section{Squarefree Veronese subrings and cut ideals of cycles}
If a graph $G$ is a cycle, then the cut ideal $I_G$ is generated by quadratic binomials by Proposition~\ref{quadgene}. 
Nagel-Petrovi\'{c} \cite[Proposition 3.2]{Petro2} claimed that the cut ideal of a cycle has a quadratic Gr\"{o}bner basis 
with respect to a lexicographic order. This claim relies on the following claims in Chifman-Petrovi\'{c} \cite{Petro1}:

\bigskip

\noindent
\underline{{\bf Claim 1} (\cite[Proposition 2]{Petro1})} \ Let $I_m$ be the toric ideal of phylogenetic invariants for the general group-based model on the
claw tree $K_{1,m}$} (defined later) which coincides with the cut ideals of the cycle of length $m+1$. Then $I_m$ is generated by $Q_m$  (defined later) which consists of quadratic binomials. 

\bigskip

\noindent
\underline{{\bf Claim 2} (\cite[Proposition 3]{Petro1})} \ The set $Q_m$ is a lexicographic Gr\"{o}bner basis of $I_m$ for any $m\ge 4$.

\bigskip

However, Claim 1 is not true for any $m\ge 5$. Therefore, Claim 2 is not true for any $m\ge 5$. 
Moreover, with respect to a lexicographic order given in \cite{Petro1}, the reduced Gr\"{o}bner basis of $I_m$ is not quadratic for any $m\ge 5$.   
In this section, we point out an error in the proof of 
\cite[Propositions 2 and 3]{Petro1} 
 for the cut ideal of the cycle and
present a lexicographic order for which 
the reduced Gr\"{o}bner basis of the cut ideal of the cycle of length $7$ consists of quadratic binomials.

First, we explain an error in the proof of \cite[Propositions 2 and 3]{Petro1}.
For each $m$-dimensional ($0,1$) vector $(i_{1},i_{2},\ldots ,i_{m})$,
we associate a variable $q_{i_{1}i_{2}\cdots i_{m}}$.
Let $K[q_{i_{1}i_{2}\ldots i_{m}}\ |\  i_{1},i_{2},\ldots ,i_{m}\in \{0,1\}]$
and $K[a_{i_{j}}^{(j)}\ | \ i_{j}\in \{0,1\} , j=1,\ldots ,m+1]$ be
polynomial rings over $K$.
Let
$$\varphi_{m}:
K[q_{i_{1}i_{2}\ldots i_{m}}\ |\  i_{1},i_{2},\ldots ,i_{m}\in \{0,1\}]\rightarrow K[a_{i_{j}}^{(j)}\ | \ i_{j}\in \{0,1\} , j=1,\ldots ,m+1]$$
be a homomorphism such that 
$\varphi _{m}(q_{i_{1}i_{2}\ldots i_{m}})=a_{i_{1}}^{(1)}a_{i_{2}}^{(2)}\ldots a_{i_{m}}^{(m)}a_{i_{1}+i_{2}+\cdots +i_{m} ({\rm mod} \ 2)}^{(m+1)}$ and let  $I_{m}$ be the kernel of $\varphi_{m}$. 
According to \cite{Petro2}, the ideal $I_{m}$ is the cut ideal of the cycle of length $m+1$. 
Let {$Q_{m}$} be a set of all quadratic 
binomials 
$$q_{i_{1}i_{2}\cdots i_{m}}q_{j_{1}j_{2}\cdots j_{m}}-q_{k_{1}k_{2}\cdots k_{m}}q_{l_{1}l_{2}\cdots l_{m}}\in I_{m}$$
satisfying one of the 
following properties:
\begin{enumerate}
\item For some $1\leq{a}\leq{m}$ and $j\in \{0,1\}$, 
\begin{equation*}
i_{a}=j_{a}=j=k_{a}=l_{a}
\end{equation*}
and the binomial 
$$q_{i_{1}\ldots i_{a-1}i_{a+1}\ldots i_{m}}q_{j_{1}\ldots j_{a-1}j_{a+1}\ldots j_{m}}-q_{k_{1}\ldots k_{a-1}k_{a+1}\ldots k_{m}}q_{l_{1}\ldots l_{a-1}l_{a+1}\ldots l_{m}}$$
belongs to $I_{m-1}$;
\item For each $1\leq{b}\leq{m}$, 
\begin{equation*}
i_{b}+j_{b}=1=k_{b}+l_{b}
\end{equation*}
and the binomial 
$$q_{i_{1}\ldots i_{b-1}i_{b+1}\ldots i_{m}}q_{j_{1}\ldots j_{b-1}j_{b+1}\ldots j_{m}}-q_{k_{1}\ldots k_{b-1}k_{b+1}\ldots k_{m}}q_{l_{1}\ldots l_{b-1}l_{b+1}\ldots l_{m}}$$
belongs to $I_{m-1}$.
\end{enumerate}
In \cite[Proposition 2]{Petro1},
$I_{m}$ is claimed to be generated by {$Q_{m}$} for any $m \ge 4$.
However, this is incorrect for $m \ge 5$.
Now, we consider the quadratic binomial
\begin{equation*}
q=q_{10101}q_{01010}-q_{11111}q_{00000}
\end{equation*}
 and the binomial $q'=q_{1101}q_{0010}-q_{1111}q_{0000}$.
Since $$\varphi_5(q_{10101}q_{01010})=\varphi_5(q_{11111}q_{00000})=a_{0}^{(1)}a_{0}^{(2)}a_{0}^{(3)}a_{0}^{(4)}a_{0}^{(5)}a_{0}^{(6)}
a_{1}^{(1)}a_{1}^{(2)}a_{1}^{(3)}a_{1}^{(4)}a_{1}^{(5)}a_{1}^{(6)},$$ $q$ belongs to $I_5$. On the other hand, since
$$\varphi_4(q_{1101}q_{0010})=a_{0}^{(1)}a_{0}^{(2)}a_{0}^{(3)}a_{0}^{(4)}a_{1}^{(1)}a_{1}^{(2)}a_{1}^{(3)}a_{1}^{(4)}(a_{1}^{(5)})^2, $$
$$\varphi_4(q_{1111}q_{0000})=a_{0}^{(1)}a_{0}^{(2)}a_{0}^{(3)}a_{0}^{(4)}a_{1}^{(1)}a_{1}^{(2)}a_{1}^{(3)}a_{1}^{(4)}(a_{0}^{(5)})^2,$$
$q'$ does not belong to $I_4$. Hence $q$ does not belong to $Q_5$. The following proposition shows that $q$ is not generated by $Q_5$.  
\begin{Proposition}
Let
\begin{equation*}
P=\left \{q_{i_{1}i_{2}i_{3}i_{4}i_{5}}q_{j_{1}j_{2}j_{3}j_{4}j_{5}}\  \middle |
\begin{array}{l}
i_{k}+j_{k}=1, 
i_{k},j_{k}\in \{0,1\} \mbox{ for }
1\leq{k}\leq{5}
\end{array} \right \}.
\end{equation*} 
Then any nonzero binomial $q=u-v$ where $u,v\in P$ does not belong to $Q_{5}$.
\end{Proposition}

\begin{proof}
Let 
$$q=
q_{i_{1}i_{2}i_{3}i_{4}i_{5}}q_{j_{1}j_{2}j_{3}j_{4}j_{5}}
-
q_{i_{1}'i_{2}'i_{3}'i_{4}'i_{5}'}q_{j_{1}'j_{2}'j_{3}'j_{4}'j_{5}'}
$$
be a nonzero binomial
where
$i_{k}+j_{k}=i_{k}'+j_{k}'=1$
and 
$i_{k},j_{k},i_{k}',j_{k}'\in \{0,1\}$
for
$1\leq{k}\leq{5}$.
It is trivial that $q$ does not satisfy property (1).
Since $i_{k}+j_{k}=i_{k}'+j_{k}'=1$ for $1\leq{k}\leq{5}$,
we have $$\sum_{k=1}^5 i_k+\sum_{k=1}^5 j_k = \sum_{k=1}^5 i_k'+\sum_{k=1}^5 j_k' =5.$$
Hence we may assume that 
$\sum_{k=1}^5 i_k \equiv
\sum_{k=1}^5 i_k' \equiv 1$
and 
$\sum_{k=1}^5 j_k \equiv\sum_{k=1}^5 j_k'\equiv0$
modulo $2$.
Since $q$ is not zero, $q_{i_{1}i_{2}i_{3}i_{4}i_{5}}
\neq q_{i_{1}'i_{2}'i_{3}'i_{4}'i_{5}'}$.
Thus we may assume that $i_k = 1$ and $i_k'=0$
for some $1 \le k \le 5$ (by exchanging $q_{i_{1}i_{2}i_{3}i_{4}i_{5}}$ and $q_{i_{1}'i_{2}'i_{3}'i_{4}'i_{5}'}$ if we need).
Then $j_k = 0$ and $j_k'=1$.
For example, if $k=1$, then
$$
q'=
q_{i_{2}i_{3}i_{4}i_{5}}q_{j_{2}j_{3}j_{4}j_{5}}
-
q_{i_{2}'i_{3}'i_{4}'i_{5}'}q_{j_{2}'j_{3}'j_{4}'j_{5}'}
$$
does not belong to $I_4$ since
$i_{2}+i_{3}+i_{4}+i_{5} \equiv j_{2}+j_{3}+j_{4}+j_{5} \equiv 0$ and 
$i_{2}'+i_{3}'+i_{4}'+i_{5}'
\equiv
j_{2}'+j_{3}'+j_{4}'+j_{5}'
\equiv 1$.
Thus $q$ does not satisfy property (2).
\end{proof}

Thus, $I_5$ is not generated by {$Q_5$}.
This is the error in the proof of \cite[Proposition 2]{Petro1}.
By this error, instead of $I_m$, an ideal that is strictly smaller than $I_m$ is considered in
the proof of \cite[Proposition 3]{Petro1}.
Unfortunately, with respect to 
a lexicographic order considered in \cite[Proposition 3]{Petro1}, the reduced Gr\"{o}bner basis of $I_m$ is not quadratic for $m \ge 5$. 
The computation for $m=5$ is given in 
\begin{center}
{\tt https:\slash\slash sci-tech.ksc.kwansei.ac.jp/\textasciitilde hohsugi/R\_Sakamoto/code\_cutideal}
\end{center}
We describe the number of binomials in the reduced Gr\"{o}bner basis with respect to a lexicographic order in \cite{Petro1} for $m=5$ in Table 1.
\begin{table}[h]
\begin{center}
\begin{tabular}{|c|c|}\hline
degree & the number of binomials \\ \hline
2 & 195 \\
3 & 10 \\
4 & 2 \\ \hline
\end{tabular}

\caption{The number of binomials in the reduced Gr\"{o}bner basis of $I_5$.}
\end{center}
\end{table}


Thus the existence of a quadratic Gr\"{o}bner basis of the cut ideal of a cycle
is now an open problem.
However, we will show that there exists a lexicographic order such that 
the reduced Gr\"{o}bner basis of the cut ideal of a cycle of length $7$
 consists of quadratic binomials.
In general, if $G$ is a cycle of length $m$, then it is known that $\{\delta_{C} \ | \ C\subset V(G)\}=\{(d_1,\ldots, d_m)
\in \{0,1\}^m \ | \ d_1+\cdots+d_m$ is even\}. 
Let $G$ be the cycle of length 7.
Then we have
\begin{center}
{$A_{G}=
\begin{pmatrix}
\bm{0} & &A& & &B& & &C& \\
1 & 1 & \cdots &1 & 1 & \cdots & 1& 1 &\cdots & 1
\end{pmatrix}
$},
\end{center}
where
\begin{center}
{$A=
\begin{pmatrix}
1 & 1 & 1 & 1 & 1 & 1 & 0 & 0 & 0 & 0 & 0 & 0 & 0 & 0 & 0 & 0 & 0 & 0 & 0 & 0 & 0 \\
1 & 0 & 0 & 0 & 0 & 0 & 1 & 1 & 1 & 1 & 1 & 0 & 0 & 0 & 0 & 0 & 0 & 0 & 0 & 0 & 0 \\
0 & 1 & 0 & 0 & 0 & 0 & 1 & 0 & 0 & 0 & 0 & 1 & 1 & 1 & 1 & 0 & 0 & 0 & 0 & 0 & 0 \\
0 & 0 & 1 & 0 & 0 & 0 & 0 & 1 & 0 & 0 & 0 & 1 & 0 & 0 & 0 & 1 & 1 & 1 & 0 & 0 & 0 \\
0 & 0 & 0 & 1 & 0 & 0 & 0 & 0 & 1 & 0 & 0 & 0 & 1 & 0 & 0 & 1 & 0 & 0 & 1 & 1 & 0 \\
0 & 0 & 0 & 0 & 1 & 0 & 0 & 0 & 0 & 1 & 0 & 0 & 0 & 1 & 0 & 0 & 1 & 0 & 1 & 0 & 1 \\
0 & 0 & 0 & 0 & 0 & 1 & 0 & 0 & 0 & 0 & 1 & 0 & 0 & 0 & 1 & 0 & 0 & 1 & 0 & 1 & 1
\end{pmatrix}
$},
\end{center}
\vspace{5truept}
\begin{center}
{$B=\left(
\begin{smallmatrix}
1 & 1 & 1 & 1 & 1 & 1 & 1 & 1 & 1 & 1 & 1 & 1 & 1 & 1 & 1 & 1 & 1 & 1 & 1 & 1 & 0 & 0 & 0 & 0 & 0 & 0 & 0 & 0 & 0 & 0 & 0 & 0 & 0 & 0 & 0 \\
1 & 1 & 1 & 1 & 1 & 1 & 1 & 1 & 1 & 1 & 0 & 0 & 0 & 0 & 0 & 0 & 0 & 0 & 0 & 0 & 1 & 1 & 1 & 1 & 1 & 1 & 1 & 1 & 1 & 1 & 0 & 0 & 0 & 0 & 0 \\
1 & 1 & 1 & 1 & 0 & 0 & 0 & 0 & 0 & 0 & 1 & 1 & 1 & 1 & 1 & 1 & 0 & 0 & 0 & 0 & 1 & 1 & 1 & 1 & 1 & 1 & 0 & 0 & 0 & 0 & 1 & 1 & 1 & 1 & 0 \\
1 & 0 & 0 & 0 & 1 & 1 & 1 & 0 & 0 & 0 & 1 & 1 & 1 & 0 & 0 & 0 & 1 & 1 & 1 & 0 & 1 & 1 & 1 & 0 & 0 & 0 & 1 & 1 & 1 & 0 & 1 & 1 & 1 & 0 & 1 \\
0 & 1 & 0 & 0 & 1 & 0 & 0 & 1 & 1 & 0 & 1 & 0 & 0 & 1 & 1 & 0 & 1 & 1 & 0 & 1 & 1 & 0 & 0 & 1 & 1 & 0 & 1 & 1 & 0 & 1 & 1 & 1 & 0 & 1 & 1 \\
0 & 0 & 1 & 0 & 0 & 1 & 0 & 1 & 0 & 1 & 0 & 1 & 0 & 1 & 0 & 1 & 1 & 0 & 1 & 1 & 0 & 1 & 0 & 1 & 0 & 1 & 1 & 0 & 1 & 1 & 1 & 0 & 1 & 1 & 1 \\
0 & 0 & 0 & 1 & 0 & 0 & 1 & 0 & 1 & 1 & 0 & 0 & 1 & 0 & 1 & 1 & 0 & 1 & 1 & 1 & 0 & 0 & 1 & 0 & 1 & 1 & 0 & 1 & 1 & 1 & 0 & 1 & 1 & 1 & 1
\end{smallmatrix}\right)
$},
\end{center}
\vspace{5truept}
\begin{center}
{$C=
\begin{pmatrix}
1 & 1 & 1 & 1 & 1 & 1 & 0 \\
1 & 1 & 1 & 1 & 1 & 0 & 1 \\
1 & 1 & 1 & 1 & 0 & 1 & 1 \\
1 & 1 & 1 & 0 & 1 & 1 & 1 \\
1 & 1 & 0 & 1 & 1 & 1 & 1 \\
1 & 0 & 1 & 1 & 1 & 1 & 1 \\
0 & 1 & 1 & 1 & 1 & 1 & 1
\end{pmatrix}$}.
\end{center}
%
In general, the configuration of the $(m,r)$-squarefree Veronese subring
is the configuration whose columns are
$$
\{(d_1,\ldots, d_m)
\in \{0,1\}^m \ | \ d_1+\cdots+d_m=r\}
.$$
The matrix $A$ is a configuration of the $(7,2)$-squarefree Veronese subring, and $B$ is a configuration of the $(7,4)$-squarefree Veronese subring. 
According to \cite[Theorem~$1.4$]{OhsugiHibi?}, 
there is a lexicographic order such that the reduced Gr\"{o}bner basis of the toric ideal of  the $(m,2)$-squarefree Veronese subring
consists of quadratic binomials for any integer $m \ge 2$. 
However, it is not known whether there is a lexicographic order such that 
the reduced Gr\"{o}bner basis of $I_{B}$ consists of quadratic binomials. Now, we consider the following question:

\begin{Question}
If we use lexicographic orders such that the reduced Gr\"{o}bner bases of $I_A$ and $I_B$ consist of quadratic binomials,
 do we obtain a lexicographic order such that the reduced Gr\"{o}bner basis of $I_{A_{G}}$ consists of quadratic binomials?
\end{Question}

To answer this question, we look for a lexicographic order $>_{1}$ such that the reduced Gr\"{o}bner basis of $I_{B}\subset K[y_{1},y_{2},\ldots ,y_{35}]$ consists of quadratic binomials.
For $i = 1,2,\ldots, 7$, we consider the subconfiguration $B_i$ of $B$ with column vectors consisting of  all column vectors of $B$ whose $i$-th component is one.
We consider combining lexicographic orders such that the reduced Gr\"{o}bner bases of $I_{B_i}$ consist of quadratic binomials.
We write down the lexicographic order $>_{1}$:\vspace{5truept} \\
$y_{1}>y_{2}>y_{4}>y_{3}>y_{5}>y_{7}>y_{6}>y_{10}>y_{9}>y_{8}>y_{11}>y_{13}>y_{12}>y_{16}>y_{15}>y_{14}>y_{20}>
y_{19}>y_{18}>y_{17}>y_{21}>y_{23}>y_{22}>y_{26}>y_{25}>y_{24}>y_{30}>y_{29}>y_{28}>y_{27}>y_{35}>y_{34}>y_{33}>
y_{32}>y_{31}$.\vspace{5truept} \\
Next, we consider combining two lexicographic orders such that the reduced Gr\"{o}bner bases of $I_{A}$ and $I_{B}$ consist
of quadratic binomials. 
We fix the order\\
 $x_{23}>x_{24}>x_{26}>x_{25}>x_{27}>x_{29}>x_{28}>x_{32}>x_{31}>
x_{30}>x_{33}>x_{35}>x_{34}>x_{38}>x_{37}>x_{36}>x_{42}>x_{41}>x_{40}>x_{39}>x_{43}>x_{45}>x_{44}>x_{48}>x_{47}>x_{46}>
x_{52}>x_{51}>x_{50}>x_{49}>x_{57}>x_{56}>x_{55}>x_{54}>x_{53}$\\
which corresponds to the lexicographic order $>_{1}$
and look for the order such that 
the reduced Gr\"{o}bner basis of $I_{A_{G}}$ consists of quadratic binomials
by modifying the order for $I_A$ using computational experiments.
A desired lexicographic order is\\
$x_{1}>x_{17}>x_{18}>x_{19}>x_{22}>x_{20}>x_{21}>x_{13}>x_{14}>x_{15}>x_{16}>x_{2}>x_{3}>x_{4}>x_{5}>x_{6}>x_{7}>
x_{8}>x_{9}>x_{10}>x_{11}>x_{12}>x_{23}>x_{24}>x_{26}>x_{25}>x_{27}>x_{29}>x_{28}>x_{32}>x_{31}>x_{30}>
x_{33}>x_{35}>x_{34}>x_{38}>x_{37}>x_{36}>x_{42}>x_{41}>x_{40}>x_{39}>x_{43}>x_{45}>x_{44}>x_{48}>x_{47}>x_{46}>
x_{52}>x_{51}>x_{50}>x_{49}>x_{57}>x_{56}>x_{55}>x_{54}>x_{53}>x_{58}>x_{59}>x_{60}>x_{61}>x_{62}>x_{63}>x_{64}$.\vspace{5truept} \\
The reduced Gr\"{o}bner basis of $I_{G}$ consists of $1050$ quadratic binomials. 
The computation is given in 
\begin{center}
{\tt https:\slash\slash sci-tech.ksc.kwansei.ac.jp/\textasciitilde hohsugi/R\_Sakamoto/code\_cutideal}
\end{center} 
Note that any cycle of length $\le 6$ is obtained by the sequence of contractions
from $G$.
Thus, we have the following.
\begin{Theorem}
 Let $G$ be a cycle of length $\le 7$.
 Then $I_G$ has a lexicographic quadratic Gr\"{o}bner basis.
 \end{Theorem}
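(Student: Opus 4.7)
The plan is to reduce the theorem to the single case of the $7$-cycle, and then to exhibit an explicit lexicographic order producing a quadratic Gr\"obner basis for $I_{C_7}$. For the reduction, note that contracting any edge of $C_n$ yields $C_{n-1}$, so for each $n \in \{3,4,5,6\}$ the cycle $C_n$ is obtained from $C_7$ by a sequence of edge contractions. By \cite[Lemma 3.2 (2)]{Sturm}, $K[A_{C_n}]$ is then a combinatorial pure subring of $K[A_{C_7}]$, and Proposition~\ref{cp} transfers the existence of a lexicographic quadratic Gr\"obner basis from $I_{C_7}$ down to each $I_{C_n}$. Thus it suffices to treat $n=7$.

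For the $7$-cycle, I would exploit the blockwise structure $A_G = (\bm{0} \mid A \mid B \mid C)$ displayed above, where $A$ is a configuration of the $(7,2)$-squarefree Veronese subring, $B$ is a configuration of the $(7,4)$-squarefree Veronese subring, and $C$ is a small ``almost-full'' block. The idea is to build a lexicographic order on $K[\bm{x}]$ by first fixing orders on each block that individually yield quadratic Gr\"obner bases of $I_A$ and $I_B$, and then splicing these orders together according to the scheme $x_1 > (\text{block }A) > (\text{block }B) > (\text{block }C)$. For block $A$, one uses the known lexicographic quadratic order for the $(d,2)$-squarefree Veronese case (\cite[Theorem 1.4]{OhsugiHibi?} with $d=7$). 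For block $B$, one constructs a lexicographic order $>_1$ on $K[y_1,\ldots,y_{35}]$ by covering $B$ with the seven subconfigurations $B_1,\ldots,B_7$ (where $B_i$ selects the columns with $i$-th coordinate equal to $1$), choosing lexicographic quadratic orders on each $I_{B_i}$, and combining them consistently into $>_1$.

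The main obstacle, and the step that is not rigorous without computer algebra, is verifying that the spliced lexicographic order actually produces a quadratic reduced Gr\"obner basis of $I_G$. A priori, quadraticity of Gr\"obner bases of the ``parts'' $I_A$ and $I_B$ need not imply quadraticity for the whole $I_G$: the $S$-polynomials that mix $A$-variables, $B$-variables, $x_1$, and the $C$-block could produce higher-degree elements under Buchberger reduction. I would therefore perform the computation in a computer algebra system, fixing the order on $I_B$ obtained from the $B_i$'s inside the larger ring and then searching over orderings of the $A$-block variables (with $x_1$ at the top and the $C$-block at the bottom) until a lexicographic order for which the reduced Gr\"obner basis of $I_G$ is quadratic is found. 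The explicit order displayed in the paper is the outcome of this search; checking that the corresponding reduced Gr\"obner basis consists of $1050$ quadratic binomials then completes the proof.
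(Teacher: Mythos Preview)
Your proposal is correct and follows essentially the same approach as the paper: reduce to $C_7$ via edge contractions and Proposition~\ref{cp}, build the lexicographic order blockwise from the $(7,2)$- and $(7,4)$-squarefree Veronese pieces (the latter via the $B_i$'s), and verify by computer that the resulting reduced Gr\"obner basis of $I_{C_7}$ consists of $1050$ quadratic binomials. You are also right to flag that the splicing step is not a priori justified and requires the computational search over orderings of the $A$-block variables, which is exactly what the paper does.
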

\subsection*{Acknowledgment}
The author is grateful to the anonymous referees for their careful
reading and helpful comments.


\begin{thebibliography}{99}
\bibitem{CoCoA}
J. Abbott, A. M. Bigatti, L. Robbiano, {\it CoCoA: a system for doing Computations in Commutative algebra.} Available at 
{\tt http:\slash\slash cocoa.dima.unige.it}
\bibitem{AokiOhsugiHibiTakemura1}
S. Aoki, T. Hibi,  H. Ohsugi, A. Takemura,
{\it Gr\"{o}bner bases of nested configurations}, 2008, J. Algebra, \textbf{320} no. 6, 2583--2593. 
\bibitem{AokiOhsugiHibiTakemura2}
S. Aoki, T. Hibi, H. Ohsugi, A. Takemura,
{\it Markov basis and Gr\"{o}bner  basis of Segre--Veronese configuration for testing independence in group-wise selections}, 
2010, Ann. Inst. Statist. Math. \textbf{62}, 299--321.

\bibitem{AHT}
A. Aramova, J. Herzog, T. Hibi,
{\it Finite lattices and lexicographic Gr\"{o}bner bases}, 2000, 
European J. Combin., \textbf{21}, 431--439.
\bibitem{Petro1} J. Chifman, S. Petrovi\'{c}, {\it Toric ideals of phylogenetic invariants for the general group-based model on claw trees $K_{1,n}$}, 2007,
in: Proceedings of the Second international conference on Algebraic Biology, (eds. H. Anai, K. Horimoto and T. Kutsia),
 Springer LNCS \textbf{4545}, Springer-Verlag, 307-321.

\bibitem{Dari}
A. D'Al\`i, {\it Toric ideals associated with gap-free graphs}, 2015,
 J. Pure Appl. Algebra, \textbf{219} issue 9, 3862--3872.

\bibitem{Deza}
M. Deza, M. Laurent, {\it Geometry of Cuts and Metrics}, 1997, Springer-Verlag, Berlin. 

\bibitem{Engst} A. Engstr\"{o}m, {\it Cut ideals of $K_{4}$-minor free graphs are generated by quadratics}, 2011, Michigan Math J. \textbf{60}, Issue 3, 705--714. 

\bibitem{HHO}
J. Herzog, T. Hibi, H. Ohsugi, {\it Binomial ideals}, 2018, Springer

\bibitem{Hibi} T. Hibi, K. Nishiyama, H. Ohsugi, A. Shikama, {\it Many toric ideals generated by quadratic binomials possess no quadratic Gr{\it \"{o}}bner bases}, 2014, J. Algebra \textbf{408}, 138--146. 

\bibitem{Petro2} U. Nagel, S. Petrovi\'{c}, {\it Properties of cut ideals associated to ring graphs}, 2009, J. Commut. Algebra \textbf{1}, 547--565.

\bibitem{Asir}
N. Noro, et al., {\it Risa\slash Asir, a computer algebra system}, {\tt http:\slash\slash www.math.kobe-u.ac.jp\slash Asir\slash}

\bibitem{HOH}
H. Ohsugi, J. Herzog, T. Hibi, {\it Combinatorial pure subrings}, 2000, 
Osaka J. Math. \textbf{37}, 745--757.

\bibitem{OhsugiHibi1}
H. Ohsugi, T. Hibi, 
{\it Toric ideals generated by quadratic binomials}, 1999,
J. Algebra {\bf 218} 509--527.

\bibitem{OhsugiHibi2}
H. Ohsugi, T. Hibi, 
{\it Koszul bipartite graphs}, 1999, Adv. in Appl. Math. \textbf{22}, 25--28. 

\bibitem{OhsugiHibi?}
H. Ohsugi, T. Hibi,
{\it Compressed polytopes, initial ideals and complete multipartite graphs}, 2000, 
Illinois J. Math. \textbf{44}, 391--406.

\bibitem{OhsugiHibi3}
H. Ohsugi, T. Hibi, 
{\it Quadratic initial ideals of root systems}, 2002, Proc. Amer. Math. Soc. \textbf{130}, 1913--1922.

\bibitem{OhsugiHibi4}
H. Ohsugi, T. Hibi,
{\it Two way subtable sum problems and quadratic Gr\"{o}bner bases}, 2009,
Proc. Amer. Math. Soc. \textbf{137}, no. 5, 1539--1542. 

\bibitem{OhsugiHibi5}
H. Ohsugi, T. Hibi,
{\it Toric rings and ideals of nested configurations}, 2010, J. Commut. Algebra \textbf{2}, 187--208. 


\bibitem{Shibata}
K. Shibata, {\it Strong Koszulness of the toric ring associated to a cut ideal},
2015,  Comment. Math. Univ. St. Pauli \textbf{64}, 71--80

\bibitem{Sturm2}
B. Sturmfels, {\it Gr\"{o}bner bases and convex polytopes}, 1996, American Mathematical Society (Amer. Math. Soc)

\bibitem{Sturm} B. Sturmfels, S. Sullivant, {\it Toric geometry of cuts and splits}, 2008, Michigan Math J. \textbf{57}, 689--709

\bibitem{Sulli} S. Sullivant, {\it Toric fiber products}, 2007, J. Algebra \textbf{316}, 560--577
\end{thebibliography}
\end{document}